\DeclareMathOperator*{\aff}{aff}
\DeclareMathOperator*{\cl}{cl}
\DeclareMathOperator*{\co}{co}
\DeclareMathOperator*{\cone}{cone}
\DeclareMathOperator*{\dist}{dist}
\DeclareMathOperator*{\interior}{int}
\DeclareMathOperator*{\lspan}{span}
\DeclareMathOperator*{\relint}{relint}
\DeclareMathOperator*{\lin}{lin}
\newcommand{\R}{\mathbb{R}}
\newtheorem{theorem}{Theorem}[section]
\newtheorem{corollary}[theorem]{Corollary}
\newtheorem{lemma}[theorem]{Lemma}
\newtheorem{proposition}[theorem]{Proposition}
\theoremstyle{definition}
\newtheorem{definition}[theorem]{Definition}
\newtheorem{example}[theorem]{Example}
\newtheorem{remark}[theorem]{Remark}
\newtheorem{question}[theorem]{Question}
\title{Dimension and structure of higher-order Voronoi cells on discrete sites}
\author{Ryan McKewen}
\author{Vera Roshchina}
\address[Ryan McKewen,Vera Roshchina]{School of Mathematics and Statistics, UNSW Sydney}
\email[Ryan McKewen]{r.mckewen@student.unsw.edu.au}
\email[Vera Roshchina]{v.roshchina@unsw.edu.au}
\subjclass[2010]{
05B45
, 52C22
, 52B05
, 52B55.
}
\keywords{Higher-order Voronoi cells, Voronoi tessellations, nearest neighbour}
\dedicatory{}
\begin{document}
	
\maketitle	
	
\begin{abstract} We study the structure of higher-order Voronoi cells on a discrete set of sites in $\R^n$, focussing on the relations between cells of different order, and paying special attention to the ill-posed case when a large number of points lie on a sphere. In particular, we prove that higher order cells of dimension $n-1$ do not exist, even though high-order Voronoi cells may have empty interior. We also present a number of open questions.
\end{abstract}

\section{Introduction}

Voronoi cells are used in computer graphics, crystallography, facility location, and have numerous other applications. The first recorded use of a Voronoi cell-like object goes back to Ren\'e Descartes \cite{descartes}, while mathematical foundations were initially developed by Dirichlet \cite{dirichlet}, Voronoi \cite{voronoi} and Delone \cite{delone}. Various generalisations of Voronoi cells are of significant practical importance and provide a source of curious mathematical problems.

A classic Voronoi diagram is a tessellation of the Euclidean space by cells generated from  a discrete set of points called sites: each cell consists of points which are no farther from a given site than from the remaining ones. A higher-order diagram is  a generalisation of this notion, where points nearest to several sites are considered. To our best knowledge, the latter notion first appeared in \cite{shamos}.

There are many applications of higher-order diagrams in diverse fields. Some recent examples include mobile sensor coverage control problems \cite{mobile,hole},  $k$-nearest
neighbour problems in spatial networks \cite{spatial}, smoothing point clouds in higher dimensions \cite{gradflow}, texture generation in computer graphics \cite{texture}, detection of symmetries in discrete point sets \cite{paradoxes}, analysis and modelling of voting in US Supreme Court \cite{supreme}. Higher-order cells are also used as a tool in resolving mathematical problems, for instance, see \cite{edelsbrunner,circlesenclosing} where Ramsey style bounds are obtained on the maximal number of points enclosed by circles passing through a pair of coloured points.

An extreme special case of higher-order diagrams is the farthest Voronoi cells studied in \cite{farthest}. In particular, this setting motivates the generalisation of the notion of boundedly exposed points used by the authors to obtain a characterisation of nonempty farthest Voronoi cell. Our work is in a similar spirit: we are focussed on structural properties of higher-order Voronoi cells, motivated by the puzzling observations on low-dimensional cells presented in \cite{multipoint} and by the work \cite{ryan} focussed on generating regular tessellations from higher-order cells. 

Our main contribution is a characterisation of the dimension of a higher-order Voronoi cell in Theorem~\ref{thm:dimension}, and a somewhat unexpected corollary that higher-order cells in $\R^n$ can not have dimension $n-1$ (while lower dimensions are possible). We also discuss some relations between cells of different order.

Our paper is organised as follows. We remind the definition and basic facts about higher-order Voronoi diagrams in Section~\ref{sec:structure}, and follow with a characterisation of the dimension of a higher-order Voronoi cell in Theorem~\ref{thm:dimension} of Section~\ref{sec:dimension}. In Section~\ref{sec:order} we discuss the relations between cells of different orders, providing some illustrative examples, and in Section~\ref{sec:neighbour} address issues pertaining to neighbour relations, specifically focussing on degenerate cases. We finish with a brief discussion of our results in  Section~\ref{sec:conclusions}, where we also present some open questions.

\section{Basic structural properties of higher-order Voronoi cells}\label{sec:structure}

Let $S,T\subseteq \R^n$. We define a Voronoi cell of $S$ with respect to $T$ as 
\begin{equation}\label{eq:defVoronoi}
V_{T}(S):=\left\{  x\in\mathbb{R}^{n}\,:\,\sup_{s\in S}\dist (x,s)\leq
\inf_{t\in T\setminus S}\dist (x,t)\right\},
\end{equation}
where $\dist(x,s) = \|x-s\|$ is the Euclidean distance in $\R^n$. Whenever the set $T\setminus S$ is nonempty and closed, and $S$ is compact, the infimum and supremum can be replaced by the minimum and maximum, respectively. When $S$ is finite, the \emph{order} of the cell $V_T(S)$ is its cardinality $|S|$.

The following consequence of the definition \eqref{eq:defVoronoi}  is key to several proofs in this paper. 

\begin{theorem}[{cf. \cite[Theorem~2.2]{multipoint}}]\label{thm:ballchar} Let $S,T\subseteq \R^n$. Then $V_T(S)\neq \emptyset$ if and only if there exists a closed Euclidean ball $B$  such that 
\begin{equation}\label{eq:ballchar}
S\subseteq B, \qquad \interior B \cap (T\setminus S) = \emptyset.
\end{equation}
Moreover,  $x\in V_T(S)$ if and only if there exists a Euclidean ball $B$ centred at $x$ that satisfies \eqref{eq:ballchar}. 
\end{theorem}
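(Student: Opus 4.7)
The plan is to prove the stronger pointwise ``moreover'' statement first, since the existence statement then follows trivially: if $V_T(S) \neq \emptyset$, pick any $x$ in it and use the ball supplied by the pointwise statement; conversely, if some ball $B$ satisfies \eqref{eq:ballchar}, then its centre $c$ must itself lie in $V_T(S)$ (as I verify below), so $V_T(S) \neq \emptyset$.

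For the pointwise equivalence, fix $x \in \R^n$ and set
\begin{equation*}
r(x) := \sup_{s \in S} \dist(x,s), \qquad R(x) := \inf_{t \in T \setminus S} \dist(x,t),
\end{equation*}
so that by \eqref{eq:defVoronoi} membership $x \in V_T(S)$ is just the inequality $r(x) \leq R(x)$. For the forward direction, assume $x \in V_T(S)$ and choose $B$ to be the closed ball centred at $x$ of radius $r(x)$. By definition of $r(x)$ every $s \in S$ satisfies $\|x-s\| \leq r(x)$, giving $S \subseteq B$; and for every $t \in T \setminus S$ we have $\|x-t\| \geq R(x) \geq r(x)$, so $t \notin \interior B$. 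For the reverse direction, given a closed ball $B$ of radius $\rho$ centred at $x$ with $S \subseteq B$ and $\interior B \cap (T \setminus S) = \emptyset$, each $s \in S$ satisfies $\|x-s\| \leq \rho$ and each $t \in T \setminus S$ satisfies $\|x-t\| \geq \rho$, hence $r(x) \leq \rho \leq R(x)$, i.e.\ $x \in V_T(S)$.

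There is really no hard step here: the argument is just an unpacking of the definitions, and the only thing to be careful about is matching the \emph{non-strict} inequality in \eqref{eq:defVoronoi} with the asymmetry between $S \subseteq B$ (closed ball, boundary allowed) and $\interior B \cap (T \setminus S) = \emptyset$ (open interior forbidden). The natural choice of radius $\rho = r(x)$ in the forward direction is precisely what makes this asymmetry work: points of $S$ are permitted to lie on $\partial B$, which is exactly what happens for any $s$ attaining the supremum $r(x)$. The ``main obstacle'' is therefore a purely cosmetic one of writing out the inequalities cleanly; no convexity, compactness, or topological input beyond monotonicity of distance is needed.
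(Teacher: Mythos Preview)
Your proof is correct and follows the same approach as the paper's own proof: both simply unpack the definition \eqref{eq:defVoronoi}, observing that the existence of a radius $r$ with $\|x-s\|\leq r$ for all $s\in S$ and $\|x-t\|\geq r$ for all $t\in T\setminus S$ is equivalent to the sup--inf inequality defining membership in $V_T(S)$. The paper compresses both directions into a single biconditional chain, whereas you spell out each implication and explicitly derive the first part of the statement from the pointwise ``moreover'' part, but the underlying argument is identical.
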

\begin{proof}
There exists a Euclidean ball of radius $r$ centred at $x$ satisfying \eqref{eq:ballchar} if and only if
\[
\|s-x\|\leq r\quad \forall s\in S, \quad \|t-x\|\geq r \quad \forall t \in T\setminus S;
\]
equivalently $\|s-x\|\leq  \|t-x\|$ for all $s\in S$ and all $t\in T\setminus S$, i.e. $x\in V_T(S)$.
\end{proof}

It may happen that for a point in a higher-order Voronoi cell there are several choices of a Euclidean balls that satisfy \eqref{eq:ballchar}. It is intriguing to explore the relation between the flexibility of this choice and the degeneracy and singularity in the cell structure. We address this in more detail in Section~\ref{sec:conclusions}.

To illustrate the usefulness of Theorem~\ref{thm:ballchar}, we show that the characterisation of a nonempty cell obtained in  \cite[Theorem~22]{farthest} follows directly from this result.

\begin{corollary}Let $T\subseteq\R^n$, $s\in \R^n$. The farthest cell $V_{\{s\}}(T\setminus \{s\})$ is nonempty if and only if $T$ is bounded and there exists a closed Euclidean ball $B$ such that
\begin{equation}\label{eq:bdreg}
T\setminus \{s\}\subset \interior B, \quad s\in \partial B.
\end{equation}
\end{corollary}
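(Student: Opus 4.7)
The plan is to apply Theorem~\ref{thm:ballchar} with the ambient set taken to be $\{s\}$ and the distinguished subset taken to be $T\setminus\{s\}$; since $\{s\}\setminus(T\setminus\{s\})=\{s\}$, that theorem characterises $V_{\{s\}}(T\setminus\{s\})\neq\emptyset$ by the existence of a closed ball $B'$ with $T\setminus\{s\}\subseteq B'$ and $s\notin\interior B'$. The real work is then to upgrade this weaker ball condition to the sharp form \eqref{eq:bdreg}.

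The sufficiency direction is immediate: if $B$ satisfies \eqref{eq:bdreg}, then $T\setminus\{s\}\subseteq B$ shows $T$ is bounded, and $s\in\partial B$ gives $s\notin\interior B$, so Theorem~\ref{thm:ballchar} yields nonemptiness.

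For necessity, I would fix $x\in V_{\{s\}}(T\setminus\{s\})$ and set $r=\|x-s\|$. By definition $\|x-t\|\le r$ for every $t\in T\setminus\{s\}$, so $T$ is bounded. To get \eqref{eq:bdreg} one must perturb the closed ball $\overline{B}(x,r)$ so that every $t\neq s$ is pushed strictly inside while $s$ remains exactly on the sphere. I would take the translate with centre $y_\epsilon=x+\tfrac{\epsilon}{r}(x-s)$ and radius $r+\epsilon$ for small $\epsilon>0$; a direct expansion then gives
\[
\|y_\epsilon-t\|^{2}-\|y_\epsilon-s\|^{2}=\bigl(\|x-t\|^{2}-r^{2}\bigr)+\tfrac{2\epsilon}{r}\bigl((x-t)\cdot(x-s)-r^{2}\bigr),
\]
reducing the claim to showing that both brackets are nonpositive and at least one is strictly negative for every $t\in T\setminus\{s\}$.

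The main obstacle is the case where $t\in T\setminus\{s\}$ also lies on $\partial\overline{B}(x,r)$, so that the first bracket vanishes and Cauchy--Schwarz alone only gives $(x-t)\cdot(x-s)\le r^{2}$. The saving point is that equality in Cauchy--Schwarz together with $\|x-t\|=\|x-s\|=r$ forces $t=s$; hence for $t\in T\setminus\{s\}$ the strict inequality $(x-t)\cdot(x-s)<r^{2}$ must hold, making the displayed difference strictly negative for every such $t$. A pleasant by-product is that the argument is pointwise, so it needs neither a uniform positive gap between $s$ and the remaining points nor any closedness assumption on $T$.
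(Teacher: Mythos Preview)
Your argument is correct and follows essentially the same route as the paper: apply Theorem~\ref{thm:ballchar}, pick $x$ in the cell with $r=\|x-s\|$, and translate the ball along $x-s$ so that $s$ stays on the boundary while every $t\neq s$ falls strictly inside. The paper simply fixes the specific translate with centre $2x-s$ and radius $2r$, which is your $y_\epsilon$ at $\epsilon=r$; your Cauchy--Schwarz step replaces the paper's collinear/non-collinear case split, but the content is identical (equality in either forces $t=s$), and as you observe the inequality actually holds for every $\epsilon>0$, not just small ones.
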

\begin{proof} It follows from Theorem~\ref{thm:ballchar} that for $V_{\{s\}}(T\setminus \{s\})\neq \emptyset$ it is necessary and sufficient to have a Euclidean ball $B'$ such that 
\begin{equation}\label{eq:modbd}
T\setminus \{s\}\subseteq B', \qquad \interior B' \cap \{s\} = \emptyset.
\end{equation}
If \eqref{eq:bdreg} holds for some ball $B'$, it yields  \eqref{eq:modbd}. Conversely, assume that \eqref{eq:modbd} holds, let $x$ be the centre of the ball $B'$, and let $r$ be its radius. It is evident that $r\leq \|x-s\|$, moreover, 
whenever $s-x$ and $t-x$ are non-collinear, we have
\[
\|t-(2x-s)\| < \|t-x\|+\|s-x\| = 2 \|s-x\|.
\]
When for some $t\in T$ the vectors $s-x$ and $t-x$ are collinear and $t\neq s$, we have $t-x = k (s-x)$ with $-1\leq k<1$.  Hence, 
\[
\|t-(2x-s)\| = \|(t-x) + (s-x)\| = (1+k)\|s-x\| <2 \|s-x\|.
\]
We deduce that 
\[
\|t-(2x-s)\| <2 \|s-x\| \quad \forall t\in T\setminus \{s\}.
\]
Together with 
\[
\|s-(2x -s)\|= 2 \|x-s\|
\]
this means that the Euclidean ball $B$ of radius $2\|s-x\|$ centred at $2x-s$ satisfies \eqref{eq:bdreg}.
\end{proof}

The point $s$ satisfying property \eqref{eq:bdreg} is called boundedly exposed extreme point of the set $\cl \co (T\cup \{s\})$ (see \cite{farthest} for a detailed explanation and a generalisation of the original notion introduced in \cite{Edelstein}).

The following elementary result will be useful for subsequent proofs.
\begin{proposition}[{\cite[Proposition~2.1]{multipoint}}]\label{prop:handycharacterisation} Let $T$ and $S$ be subsets of $\R^{n}$. Then $V_{T}\left(  S\right)  $ can be represented as the intersection of closed halfspaces,%
\begin{equation}
V_{T}(S)=\bigcap_{\substack{s\in S \\t\in T\setminus S}}\left\{
x\in\R^{n}\,:\,\langle t-s,x\rangle\leq\frac{1}{2}\left(  \Vert
t\Vert^{2}-\Vert s\Vert^{2}\right)  \right\}  .\label{eq:linrep}%
\end{equation}

\end{proposition}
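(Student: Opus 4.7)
The plan is to unpack the definition \eqref{eq:defVoronoi} pointwise and show that the inequality $\sup_{s\in S}\dist(x,s)\leq \inf_{t\in T\setminus S}\dist(x,t)$ is equivalent to a family of pairwise halfspace constraints indexed by $(s,t)\in S\times (T\setminus S)$. The key observation is that $x\in V_T(S)$ if and only if $\|x-s\|\leq \|x-t\|$ simultaneously for every $s\in S$ and every $t\in T\setminus S$, since each side of the defining inequality decouples into the supremum and infimum of one-variable quantities.

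Next, I would rewrite each individual inequality $\|x-s\|\leq \|x-t\|$ by squaring both sides (which is a reversible operation because both quantities are nonnegative) to obtain
\begin{equation*}
\|x\|^{2}-2\langle x,s\rangle+\|s\|^{2}\leq \|x\|^{2}-2\langle x,t\rangle+\|t\|^{2}.
\end{equation*}
Cancelling the $\|x\|^{2}$ terms and rearranging yields the linear inequality
\begin{equation*}
\langle t-s,x\rangle \leq \tfrac{1}{2}\bigl(\|t\|^{2}-\|s\|^{2}\bigr),
\end{equation*}
which is precisely the halfspace appearing in \eqref{eq:linrep}. Intersecting over all admissible pairs $(s,t)$ gives the claimed representation.

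There is no real obstacle here; the statement is essentially the classical derivation of perpendicular bisectors from the Voronoi distance condition, with the minor point being that the $\sup$--$\inf$ formulation in \eqref{eq:defVoronoi} must be converted into the universally quantified pairwise comparison before squaring. Since the inequality $\sup_{s}\|x-s\|\leq \inf_{t}\|x-t\|$ over nonempty index sets is logically equivalent to $\|x-s\|\leq \|x-t\|$ for every pair, and the squaring step is an equivalence on $[0,\infty)$, the argument proceeds by a direct chain of equivalences with no case analysis required.
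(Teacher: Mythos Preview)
Your argument is correct and is exactly the standard derivation. Note that the paper itself does not supply a proof of this proposition: it is quoted from \cite{multipoint} and stated without argument, so there is nothing further to compare against.
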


\section{Dimensions of higher-order Voronoi cells}\label{sec:dimension}

We refine Theorem~\ref{thm:ballchar} in the following explicit characterisation of   dimensions of high-order cells.

\begin{theorem}\label{thm:dimension} Let $S,T\subset \R^n$ be such that $T$ is discrete and $S$ is finite. Suppose that $B$ is a closed Euclidean ball such that $S\subseteq B$ and $(T\setminus S )\cap \interior B = \emptyset$. Let 
\begin{equation}\label{eq:intersectionC}
C := \co (S\cap \partial B)\cap \co ((T\setminus S)\cap \partial B),
\end{equation}
and let $F_S$ and $F_T$ be the minimal faces of $\co (S\cap \partial B)$ and $\co ((T\setminus S)\cap \partial B)$ respectively that contain $C$. 
Then 
$$
\dim V_T(S) = \begin{cases}
n\, & \text{if } C = \emptyset,\\
n- \dim \co \{F_S,F_T\}, & \text{if } C \neq \emptyset.
\end{cases}
$$
\end{theorem}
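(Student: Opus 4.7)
The plan is to compute the tangent cone of $V_T(S)$ at $x_0$, the centre of $B$, and to use that this cone has the same dimension as $V_T(S)$. Note $x_0 \in V_T(S)$ by Theorem~\ref{thm:ballchar}. The halfspace representation in Proposition~\ref{prop:handycharacterisation}, together with the discreteness of $T$, ensures that the inequality indexed by $(s,t)$ is tight at $x_0$ exactly when $\|s-x_0\|=\|t-x_0\|$; the hypotheses $S\subseteq B$ and $(T\setminus S)\cap\interior B=\emptyset$ then force this to be equivalent to $s\in\mathcal{S}:=S\cap\partial B$ and $t\in\mathcal{T}:=(T\setminus S)\cap\partial B$. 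Locally $V_T(S)$ is therefore a polyhedron whose tangent cone at $x_0$ equals
\[
K = \{v\in\R^n : \langle t-s,v\rangle\leq 0 \text{ for all } s\in\mathcal{S},\, t\in\mathcal{T}\},
\]
and standard conic duality gives $\dim V_T(S) = \dim K = n - \dim\lin(K^*)$, where $K^* = \cone\{t-s : s\in\mathcal{S},\,t\in\mathcal{T}\}$.

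\textbf{Reducing to convex hulls.} A straightforward transportation-type rearrangement of nonnegative coefficients shows $K^* = \cone(\co\mathcal{T}-\co\mathcal{S})$. When $C=\emptyset$, the compact hulls $\co\mathcal{S}$ and $\co\mathcal{T}$ admit a strict separator, producing a vector in $\interior K$, so $\dim K=n$ as claimed. For the remainder of the argument I assume $C\neq\emptyset$, pick $c\in\relint C$, and translate so that $c=0$; the minimality of $F_S$ and $F_T$ then implies $c\in\relint F_S\cap\relint F_T$, as any face of $\co\mathcal{S}$ (or $\co\mathcal{T}$) containing $c$ must contain all of $C$.

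\textbf{Identifying the lineality space.} Set $W := \lspan F_S + \lspan F_T$, which is a linear subspace since $0\in F_S\cap F_T$. For the inclusion $W\subseteq\lin(K^*)$, observe that $0\in\relint F_S\cap\relint F_T$ implies that the Minkowski difference $F_T-F_S$ contains a neighbourhood of $0$ in $W$; hence $W\subseteq\cone(F_T-F_S)\subseteq K^*$, and since $W=-W$, also $W\subseteq\lin(K^*)$. For the reverse inclusion, take $w\in\lin(K^*)\setminus\{0\}$ and write $w=\sigma(n-m)$ and $-w=\sigma'(n'-m')$ with $n,n'\in\co\mathcal{T}$, $m,m'\in\co\mathcal{S}$, and $\sigma,\sigma'>0$. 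Adding these relations gives
\[
\frac{\sigma n+\sigma'n'}{\sigma+\sigma'}=\frac{\sigma m+\sigma'm'}{\sigma+\sigma'},
\]
which lies in $C\subseteq F_T\cap F_S$. The defining property of the face $F_T$, applied to this positive convex combination of $n,n'\in\co\mathcal{T}$, forces $n,n'\in F_T$; symmetrically $m,m'\in F_S$, and thus $w\in W$. Elementary dimension counting (using $0\in F_S\cap F_T$) yields $\dim W=\dim\co\{F_S,F_T\}$, completing the proof. I expect the main obstacle to be this final face-theoretic step, together with the routine-but-careful verification that the local polyhedral picture truly captures $\dim V_T(S)$ despite the potentially infinite family of halfspace constraints defining the cell.
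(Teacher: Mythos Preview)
Your proof is correct and follows essentially the same route as the paper: reduce to the tangent cone at the centre of $B$ via the halfspace representation and the discreteness of $T$, then compute its dimension by conic duality as $n-\dim\lin K^*$ with $K^*=\cone(\co\mathcal{T}-\co\mathcal{S})$, and finally identify the lineality space using the face property together with a point $c\in\relint F_S\cap\relint F_T$. Your packaging of the last step---directly setting $W=\lspan F_S+\lspan F_T$ after translating $c$ to the origin and showing $W=\lin K^*$---is a bit tidier than the paper's two-stage argument, which first proves $\lin K^*=\cone(F_S-F_T)$ and then separately establishes $\dim\cone(F_S-F_T)=\dim\co\{F_S,F_T\}$, but the underlying ideas are identical.
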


\begin{figure}[ht]
\includegraphics[width = 0.8\textwidth]{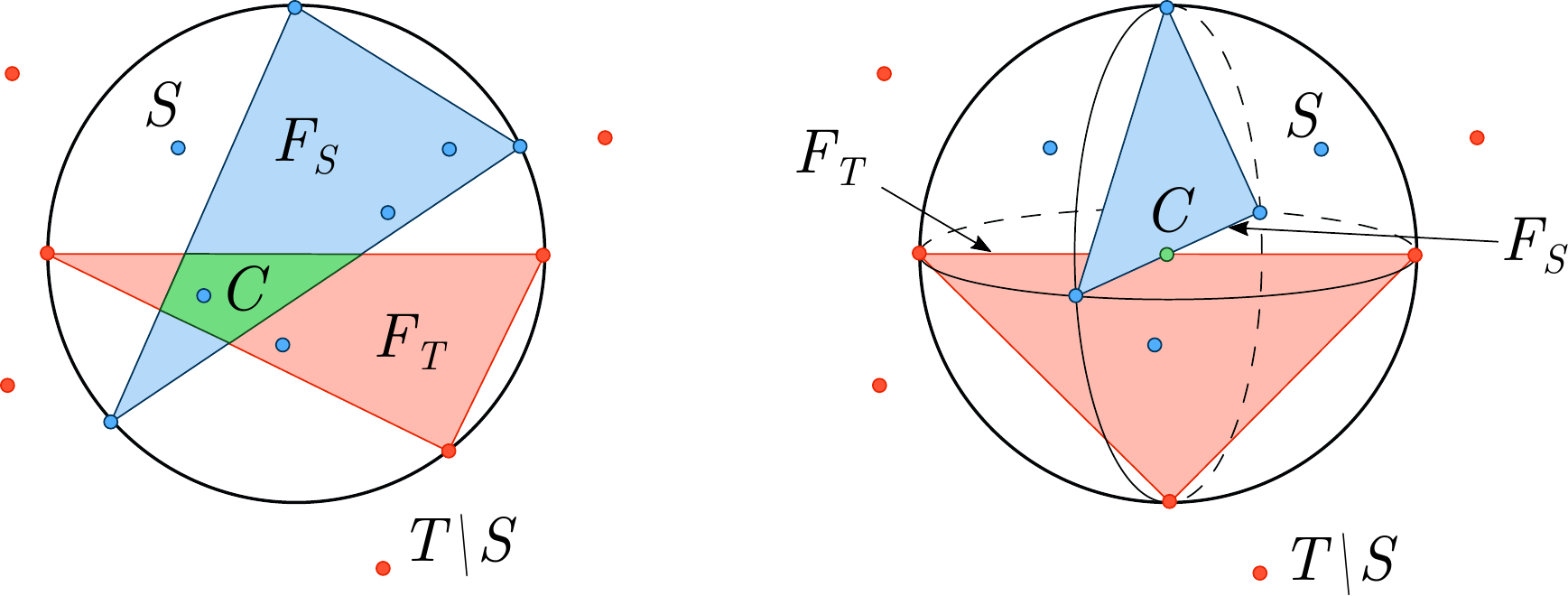}
\label{fig:illustration}
\caption{Illustration for Theorem~\ref{thm:dimension} }
\end{figure}

We will use the following two results in the proof of Theorem~\ref{thm:dimension}.

\begin{proposition}\label{prop:dim} Let $C$ and $D$ be convex sets in $\R^n$ such that $C\cap \interior D\neq \emptyset$. Then $\dim C = \dim (C\cap D)$.
\end{proposition}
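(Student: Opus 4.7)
The plan is to show that $C$ and $C \cap D$ share the same affine hull by working locally around a point in $C \cap \interior D$. Dimension of a convex set is defined as the dimension of its affine hull, and since $C \cap D \subseteq C$ always gives $\aff(C \cap D) \subseteq \aff C$, it suffices to establish the reverse inclusion.

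First, I would pick $x \in C \cap \interior D$, which exists by hypothesis, and choose $\varepsilon > 0$ so that the open ball $B(x, \varepsilon)$ lies in $D$. The key observation is that for any $y \in C$, convexity of $C$ gives $x + t(y - x) \in C$ for all $t \in [0,1]$, and for $t > 0$ small enough, $x + t(y-x) \in B(x,\varepsilon) \subseteq D$. Hence $x + t(y-x) \in C \cap D$ for some $t > 0$.

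This means the vector $t(y - x)$, and therefore $y - x$ itself (affine hulls are invariant under positive scaling of direction vectors based at $x$), lies in $\aff(C \cap D) - x$. Since $\aff(C) - x$ is the linear span of $\{y - x : y \in C\}$ and each generator sits in $\aff(C \cap D) - x$, we get $\aff(C) - x \subseteq \aff(C \cap D) - x$. Combined with the trivial reverse inclusion, $\aff(C) = \aff(C \cap D)$, so $\dim C = \dim(C \cap D)$.

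There is no real obstacle here: the whole argument rests on the standard convex-analysis idea that an interior point of $D$ is surrounded by a ball, which lets one ``probe'' every direction of $C$ while staying inside $D$. The only thing to be careful about is the scaling step — making sure we pass from $t(y-x) \in \aff(C \cap D) - x$ to $y - x \in \aff(C \cap D) - x$ correctly, which is immediate because $\aff(C \cap D) - x$ is a linear subspace.
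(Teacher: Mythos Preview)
Your proof is correct and follows essentially the same approach as the paper: pick $x\in C\cap\interior D$, use convexity of $C$ and openness of $\interior D$ to place $x+\alpha(y-x)$ in $C\cap D$ for small $\alpha>0$, and conclude $y\in\aff(C\cap D)$. The only cosmetic difference is that the paper phrases this as a proof by contradiction (assuming some $y\in C\setminus\aff(C\cap D)$ exists), whereas you argue directly that $\aff C\subseteq\aff(C\cap D)$; the underlying mechanism is identical.
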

\begin{proof} Since $C\cap D \subseteq C$, we have $\dim C \geq \dim C\cap D$. Suppose that $\dim C\cap D<\dim C$. Then there exists $y\in C\setminus \aff (C\cap D)$. Let $x\in C\cap \interior D$. For the line segment $[x,y]$ connecting $x$ and $y$ we have $[x,y]\subset C$, also since $x\in \interior D$ there exists a sufficiently small $\alpha\in (0,1]$ such that $x+ \alpha (y-x)\in D$. Therefore, $x+\alpha (y-x) \in (C \cap D)\subset \aff (C \cap D) $. Since $x\in \aff (C \cap D)$ and $x+\alpha (y-x) \in \aff (C \cap D)$, we must also have $x+ (y-x) = y\in \aff (C \cap D)$, which contradicts the assumption.
\end{proof}

\begin{proposition}\label{prop:dimdual} Let $K\subseteq\R^n$ be a closed convex cone and let 
$$
K^\circ = \{y\in \R^n \, |\, \langle x,y\rangle \leq 0 \; \forall x\in K\} 
$$
be its (negative) polar. Then $\dim K^\circ = n - \dim \lin K$.
\end{proposition}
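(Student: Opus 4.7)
The plan is to reduce the statement to the standard identity $(\lspan K^\circ)^\perp = \lin K$, which in turn follows from the bipolar theorem. Since $K^\circ$ is a closed convex cone containing the origin, its affine hull coincides with its linear span $L := \lspan K^\circ$, and $K^\circ$ has nonempty relative interior in $L$, so $\dim K^\circ = \dim L$. It therefore suffices to show that $L^\perp = \lin K$, after which taking orthogonal complements gives $\dim L = n - \dim \lin K$.

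To establish $L^\perp = \lin K$, I would argue by double inclusion via duality. A vector $y$ lies in $L^\perp$ if and only if $\langle y, z\rangle = 0$ for every $z \in K^\circ$, which is in turn equivalent to the pair of conditions $\langle y, z\rangle \le 0$ and $\langle -y, z\rangle \le 0$ for every $z \in K^\circ$. By definition of the polar, this says $y \in (K^\circ)^\circ$ and $-y \in (K^\circ)^\circ$. Since $K$ is a closed convex cone, the bipolar theorem gives $(K^\circ)^\circ = K$, so $y \in L^\perp$ if and only if $y \in K \cap (-K) = \lin K$.

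Combining the two steps yields $\dim K^\circ = \dim L = n - \dim L^\perp = n - \dim \lin K$, as required. There is no real obstacle here: the argument is a direct application of two standard facts from convex analysis, namely the bipolar theorem for closed convex cones and the coincidence between affine hull and linear span for any set containing the origin that is closed under positive scaling. The only point that requires a word of justification in the write-up is why $\dim K^\circ$ equals the dimension of its linear span, and this is immediate because $0 \in K^\circ$ implies $\aff K^\circ = \lspan K^\circ$.
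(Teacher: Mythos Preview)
Your argument is correct. It differs from the paper's proof in its organising idea: you compute $(\lspan K^\circ)^\perp$ directly and identify it with $\lin K$ via the bipolar theorem, whereas the paper proves the two inequalities separately --- first $K^\circ\subseteq(\lin K)^\perp$ by elementary pairing, then the reverse dimension bound by invoking the decomposition $K=\lin K + K'$ with $K'$ pointed and applying a strict separation argument to show that $K^\circ$ has nonempty interior inside $(\lin K)^\perp$. Your route is shorter and packages everything into one duality identity; the paper's route is more hands-on and makes the geometric picture (pointed part plus lineality) explicit, at the cost of citing two external results rather than one. Both ultimately rest on separation, since the bipolar theorem is itself a consequence of it.
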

\begin{proof}[Proof]
Fix any $x \in \lin K$. We have $\langle x,y\rangle =0$ for all $y \in K^\circ$, hence, $K^\circ \subseteq (\lin K)^\perp$, and $\dim K^\circ \leq n-\dim \lin K$.

Since $K = \lin K + K'$ with  $K'\subset (\lin K)^\perp$ pointed (see \cite[Lemma~5.33]{Guler}), within the space $(\lin K)^\perp$ the set $\co [(K \cap (\lin K)^\perp)\cap S]$, where $S$ is the unit sphere, can be strictly separated from zero (see \cite[Corollary 4.1.3]{FundamentalsConvexAnalysis}), which means that the dual cone to $K \cap (\lin K)^\perp$ within $(\lin K)^\perp$ has a nonempty interior. Hence we have $\dim K^\circ \geq \dim (\lin K)^\perp =   n-\dim \lin K$.
\end{proof}

\begin{proof}[Proof of Theorem~\ref{thm:dimension}] Without loss of generality assume that the Euclidean ball $B$ is centred at zero. Then by Proposition~\ref{prop:handycharacterisation} we have 
\begin{equation}\label{eq:ineqrepres}
V_T(S) = \bigcap_{\substack{s\in S\\t\in T\setminus S} }\left\{x\in \R^n\,\Bigl| \, \langle t-s,x\rangle \leq \frac{1}{2}(\|t\|^2 - \|s\|^2)\right\} = V \cap V' \cap V'',
\end{equation}
where 
\begin{align*}
V & = \bigcap_{\substack{s\in S\cap \partial B\\t\in (T\setminus S)\cap \partial B} }\left\{x\in \R^n\,\Bigl| \, \langle t-s,x\rangle \leq 0\right\} \quad (\text{since } \|t\|=\|s\|),\\
V'&  =   \bigcap_{\substack{s\in S\cap \interior B\\t\in (T\setminus S)\cap \partial B} }\left\{x\in \R^n\,\Bigl| \, \langle t-s,x\rangle \leq \frac{1}{2}(\|t\|^2 - \|s\|^2)\right\},\\
V'' & = \bigcap_{\substack{s\in S\\t\in T\setminus B }}\left\{x\in \R^n\,\Bigl| \, \langle t-s,x\rangle \leq \frac{1}{2}(\|t\|^2 - \|s\|^2)\right\}.
\end{align*}	

We have $\|t\|>\|s\|$ for all pairs $(s,t) \in \interior B \times (\R^n\setminus \interior B)$ and $(s,t) \in B \times (\R^n \setminus B)$. Since $T$ is discrete and $S$ is finite, there exists a sufficiently small $\varepsilon>0$ such that for a ball $B_\varepsilon$ of radius $\varepsilon$ centred at zero we have $B_\varepsilon \subset V'\cap V''$. Hence, we have 
\begin{equation}\label{eq:intballs}
V_T(S)\cap B_\varepsilon = (V\cap B_\varepsilon)\cap (V'\cap B_\varepsilon) \cap (V''\cap B_\varepsilon) = V\cap B_\varepsilon.
\end{equation}
Observe that for any $s\in S$ and $t\in T\setminus S$ we have $\|t\|\geq \|s\|$, hence, it is clear from \eqref{eq:ineqrepres} that $0\in V_T(S)\subseteq V$, so $V_T(S) \cap \interior B_\varepsilon \neq 0 $. We hence have from \eqref{eq:intballs} and Proposition~\ref{prop:dim}
$$
\dim V_T(S) = \dim V_T(S)\cap B_\varepsilon = \dim V \cap B_\varepsilon = \dim V.
$$
Observe that $V = (\cone [((T\setminus S)\cap \partial B)-(S\cap \partial B)])^\circ$. By Proposition~\ref{prop:dimdual} we have 
$$
\dim V = n- \dim \lin \cone [((T\setminus S)\cap \partial B)-(S\cap \partial B)].
$$
It remains to show that 
$$
\dim \lin \cone [((T\setminus S)\cap \partial B)-(S\cap \partial B)] = \dim \co \{F_S, F_T\},
$$
from which the result would follow. We break this down into two steps: first we show that
\begin{equation}\label{eq:4356546745}
\lin \cone [((T\setminus S)\cap \partial B)-(S\cap \partial B)] = \cone \{F_S- F_T\},
\end{equation}
and then demonstrate that 
\begin{equation}\label{eq:dimequal}
\dim \cone \{F_S- F_T\} = \dim\co \{F_T,F_S\}.
\end{equation}

Observe that $z\in \lin \cone [((T\setminus S)\cap \partial B)-(S\cap \partial B)] $ if and only if for some $w = \alpha z$ we have $-w, w\in \co ((T\setminus S)\cap \partial B)-\co (S\cap \partial B) $. Then 
$$
w = u_1-v_1, \quad -w = u_2-v_2,\qquad u_1,u_2\in \co ((T\setminus S)\cap \partial B),\quad  v_1,v_2\in \co (S\cap \partial B), 
$$
hence, $u_1+u_2 = v_1+v_2$, and effectively 
$$
(u_1,u_2)\cap (v_1,v_2) \neq \emptyset,
$$
which implies that $u_1,u_2$ and $v_1,v_2$ belong to the minimal faces $F_S$ and $F_T$ containing $C$. Therefore, 
$$
\lin \cone [((T\setminus S)\cap \partial B)-(S\cap \partial B)] \subseteq \cone \{F_S - F_T\}.
$$ 
Now suppose that $z\in \cone \{F_T-F_S\}$. We will show that $-z\in \cone \{F_T-F_S\}$, hence,
\begin{equation}\label{eq:8745345}
\cone \{F_T-F_S\} = \lin \cone\{F_T-F_S\} \subseteq \lin \cone [((T\setminus S)\cap \partial B)-(S\cap \partial B)].
\end{equation}

We have 
\begin{equation}\label{eq:zexp}
z = \sum_{i=1}^m \lambda_i (p_i-q_i),
\end{equation}
where $p_i\in F_T$, $q_i \in F_S$, and $\lambda_i\geq 0$ for all $i = 1,2,\dots, m$.

Now since $F_T$ and $F_S$ are the minimal faces, we must have $\relint C \subseteq \relint F_T\cap \relint F_S$, and hence there exists $c\in \relint F_T\cap \relint F_S$. We can then find an $\alpha>0$ such that 
\[
c+ \alpha (c-p_i)=: p_i'\in F_T, \quad c+ \alpha(c-q_i)=: q_i'\in F_S\qquad \forall i = 1,2,\dots, m.
\] 
Rearranging, we obtain
\[
p_i = \frac{1+\alpha}{\alpha} c - \frac{1}{\alpha}p'_i, \quad 
q_i = \frac{1+\alpha}{\alpha} c - \frac{1}{\alpha}q'_i \qquad \forall \, i = 1,2,\dots, m.
\]
Substituting this into \eqref{eq:zexp}, we have 
\[
z = \sum_{i=1}^m \lambda_i \left(\left[\frac{1+\alpha}{\alpha} c - \frac{1}{\alpha}p'_i\right]-\left[\frac{1+\alpha}{\alpha} c - \frac{1}{\alpha}q'_i\right]\right) =- \sum_{i=1}^m \frac{\lambda_i}{\alpha} \left(p_i'-q_i'\right) \in  - \cone \{F_T-F_S\} ,
\]
hence, $-z \in \cone \{F_T-F_S\} $ and we are done with \eqref{eq:8745345}, which finishes the proof of \eqref{eq:4356546745}.

It remains to show \eqref{eq:dimequal}. Observe that for any $c\in \R^n$ we have 
\[
\dim \co \{F_T,F_S\} = \dim \co \{F_T-\{c\},F_S-\{c\}\},
\]
hence it suffices to show that 
\[
\aff \co \{F_T-\{c\},F_S-\{c\}\} = \aff  \cone \{F_T-F_S\}.
\]
Let $c\in \relint F_T\cap \relint F_S$, as before. Since $0\in F_T-\{c\}$ and $0\in F_T-F_S$, we have
\[
 \aff \co \{F_T-\{c\},F_S-\{c\}\} =  \lspan \co \{F_T-\{c\},F_S-\{c\}\} =   \lspan \cone\{(F_T-\{c\})\cup(F_S-\{c\})\},
\]
and 
\[
\aff \cone \{F_T-F_S\} = \lspan \cone \{F_T-F_S\}.
\]
Therefore, it is sufficient to show that $\cone \{F_T-F_S\} = \cone[ (F_T-\{c\})\cup(F_S-\{c\})]$.

Let $z\in \cone \{F_T-F_S\}$. Then 
\begin{equation}\label{eq:repr3}
z = \sum_{i=1}^m \lambda_i (p_i-q_i), \quad \lambda_i \geq 0, \; p_i \in F_T, \; q_i \in F_S.
\end{equation}
Observe that since $c\in \relint F_S$, for every $i$ there exists $\alpha_i>0$ such that 
\[
c+ \alpha_i (c- q_i) =: q_i'\in F_S,
\]
hence $c-q_i = \frac{1}{\alpha_i} (q_i'-c)$. From \eqref{eq:repr3} we have 
\[
z = \sum_{i=1}^m \lambda_i [(p_i-c)-(q_i-c)] = \sum_{i=1}^m \lambda_i (p_i-c)+\sum_{i=1}^m \frac{\lambda_i}{\alpha_i}(q_i'-c), 
\]
hence, $z\in \cone[(F_T-\{c\})\cup(F_S-\{c\})]$.

Now assume $z\in \cone[(F_T-\{c\})\cup(F_S-\{c\})]$. Then 
\[
z = \sum_{i=1}^m \lambda_i (p_i -c)+\sum_{j=1}^k \mu_j (q_i -c), \quad \lambda_i,\mu_j\geq 0, p_i \in F_T, q_i\in F_S.
\]
If $\sum_{i=1}^m\lambda_i = 0$, then every $\lambda_i=0$, and $z\in \cone\{F_S-\{c\}\}\subseteq \cone \{F_S-F_T\} = \cone \{F_T-F_S\}$ (see the preceding discussion on lineality spaces).

If $\sum_{j=1}^k \mu_j = 0$, then every $\mu_j = 0$ and $z\in \cone\{F_T-\{c\}\}\subseteq \cone \{F_T-F_S\} $.

Finally if both sums are positive, we have 
\begin{align*}
z & = \sum_{i=1}^m \lambda_i \cdot \frac{\sum_{j=1}^k \mu_j}{\sum_{j=1}^k \mu_j}  (p_i -c)+\sum_{j=1}^k \mu_j \cdot \frac{\sum_{j=1}^m \lambda_i}{\sum_{j=1}^m \lambda_i} (q_i -c)\\ 
& = \sum_{i=1}^m \lambda_i \cdot \frac{\sum_{j=1}^k \mu_j}{\sum_{j=1}^k \mu_j}  (p_i -c)+\sum_{j=1}^k \mu_j \cdot \frac{\sum_{j=1}^m \lambda_i}{\sum_{j=1}^m \lambda_i} (q_i -c)\\
& = \sum_{i=1}^m \sum_{j=1}^k \lambda_i \mu_j\left(  \frac{1}{\sum_{j=1}^k \mu_j}  (p_i -c)+ \frac{1}{\sum_{j=1}^m \lambda_i} (q_i -c)\right).
\end{align*}
Observe that since $c\in \relint F_T$ and $c\in \relint F_S$, there exists a sufficiently small $\alpha>0$ such that 
\[
c +\frac{\alpha}{\sum_{j=1}^k \mu_j}  (p_i -c) = p'_i \in F_T, \quad 
c -\frac{\alpha}{\sum_{i=1}^m \lambda_i}  (q_j -c) = q'_j \in F_S, 
\]
so we have 
\[
z = \sum_{i=1}^m \sum_{j=1}^k \lambda_i \mu_j\left(  \frac{1}{\alpha}  (p'_i -c)+ \frac{1}{\alpha} (c-q_j')\right)= \sum_{i=1}^m \sum_{j=1}^k \frac{\lambda_i \mu_j}{\alpha}\left( p'_i - q'_i\right) \in \cone \{F_T-F_S\}.
\]

\end{proof}	

It would be curious to see if Theorem~\ref{thm:dimension} can be generalised to a non-discrete setting.
	
\begin{corollary}\label{cor:nodimn1} If $T\subset \R^n$ is discrete, there are no Voronoi cells of finite order that have dimension $n-1$ in $\R^n$.
\end{corollary}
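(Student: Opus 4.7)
The plan is to argue by contradiction via Theorem~\ref{thm:dimension}. Suppose some higher-order cell $V_T(S)$ has dimension exactly $n-1$. Theorem~\ref{thm:ballchar} supplies a witnessing ball $B$, and Theorem~\ref{thm:dimension} then forces $C \neq \emptyset$ (otherwise the dimension would be $n$) together with $\dim \co\{F_S, F_T\} = 1$. My aim is to show this last equation is incompatible with the fact that all extreme points of $F_S$ and $F_T$ lie on a common sphere.

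The first step is to extract from $\dim \co\{F_S, F_T\} = 1$ the geometric statement that $F_S \cup F_T$ is contained in an affine line $\ell$. Because $F_S$ is a face of the polytope $\co(S \cap \partial B)$, all its extreme points lie in $S \cap \partial B$; intersecting with $\ell$, they must in fact lie in $S \cap \partial B \cap \ell$. The analogous inclusion holds for $F_T$ with $T \setminus S$ in place of $S$.

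The key geometric input is that a line meets a sphere in at most two points, so $|\ell \cap \partial B| \le 2$. Since $C$ is nonempty and sits inside both $F_S$ and $F_T$, each of those faces is nonempty and so must contribute at least one extreme point chosen from $\ell \cap \partial B$. Disjointness of $S$ and $T \setminus S$ prevents $F_S$ and $F_T$ from sharing any such extreme point. A brief case split on $|\ell \cap \partial B| \in \{0,1,2\}$ shows the only way to accommodate two nonempty faces with disjoint extreme-point sets in at most two sphere-line intersections is for $F_S$ and $F_T$ to be distinct singletons; but then $F_S \cap F_T = \emptyset$, contradicting $\emptyset \neq C \subseteq F_S \cap F_T$.

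The only real hurdle is noticing that the hypothesis $\dim \co\{F_S, F_T\} = 1$ collapses the problem onto a single chord of $\partial B$. Once that reduction is in hand, the disjointness of $S$ and $T \setminus S$ does all the remaining work, so the proof should be short.
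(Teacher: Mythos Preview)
Your proposal is correct and follows essentially the same route as the paper: invoke Theorem~\ref{thm:dimension}, deduce that $C\neq\emptyset$ and $\dim\co\{F_S,F_T\}=1$, then use the fact that the extreme points of $F_S$ and $F_T$ lie on $\partial B$ (hence on a line meeting the sphere in at most two points) to force $F_S$ and $F_T$ to be distinct singletons, contradicting $\emptyset\neq C\subseteq F_S\cap F_T$. Your version is in fact a bit more explicit than the paper's about the line--sphere intersection and the disjointness of the extreme-point sets, but the argument is the same.
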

\begin{proof} It follows from Theorem~\ref{thm:dimension} that for a higher-order Voronoi cell to have dimension  $n-1$ one needs to have a Euclidean ball satisfying the assumption of Theorem~\ref{thm:dimension} and  $\dim  \co \{F_S, F_T\}=1$. Since $\co \{F_S, F_T\}$ is the convex hull of some points from $(S\cap \partial B)\cup (T\setminus S \cap \partial B)$, it must be a line segment connecting two points, one of them from $S$, and the other one from $T\setminus S$. We conclude that the minimal faces $F_S$ and $F_T$ must be single points on the surface of the sphere, so   
$$
\emptyset = F_S\cap F_T = \co (S\cap \partial B)\cap \co (T\cap \partial B),
$$
and the Voronoi cell is empty, hence, it cannot have dimension $n-1$. 
\end{proof}

Note that Theorem~\ref{thm:dimension} and Corollary~\ref{cor:nodimn1} significantly generalise \cite[Proposition~3.1]{multipoint}. Observe that on the plane we can only have cells of dimensions 0 and 2. Furthermore, in the setting of $|T|=4$ and $|S|=2$ on the plane for a cell of dimension 0 we have $F_S\cap F_T \neq \emptyset$. The only possibility given the constraints on the cardinalities is to have these sets as two intersecting line segments with vertices on a circle. This is precisely the case of the diagonals of a cyclic quadrilateral.

\begin{example}[A cell of dimension 1 in $\R^3$] Let $S = \{(1,0,0),(-1,0,0),(0,0,1)\}$, $T\setminus S = \{(0,1,0),(0,-1,0), (0,0,-1)\}$. Observe that $\|t\|=1$ for every $t\in T$, and order 3 Voronoi cell $V_T(S)$ is defined by the system
\[
y-x\leq 0, \; y+x\leq 0,  \;-y-x\leq 0,  \;-y+x\leq 0, \; y-z\leq 0, \; -y-z\leq 0, \; -z\leq 0.
\]
From the first four inequalities we obtain $y\leq x\leq y$ and $-y\leq x\leq -y$, hence, $x=y=-y = 0$. Then the last three inequalities yield $z\geq 0$. Hence the Voronoi cell is exactly
$$
V_T(S) = 0_2 \times \R_+.
$$

We next verify Theorem~\ref{thm:dimension} for this example. Observe that the unit ball $B = \{(x,y,z)\,|\, x^2+y^2+z^2 \leq 1\}$ centred at 0 satisfies the conditions of the theorem, moreover, we have $S = S\cap \partial B$, $T = T\cap \partial B$.

For the intersection 
\begin{align*}
C & = \co(S\cap \partial B) \cap \co ((T\setminus S) \cap \partial B)\\
&  = \co \{(1,0,0),(-1,0,0), (0,0,1)\} \cap \co \{(0,1,0),(0,-1,0), (0,0,-1)\} 
\end{align*}
we have $(x,y,z)\in C$ iff
$$
x = \alpha_1-\alpha_2=0, y = \beta_1-\beta_2 = 0, z = \alpha_3 = - \beta_3, \quad \alpha_i,\beta_i \geq 0, \sum\limits_{i=1}^3 \alpha_i = \sum\limits_{i=1}^3 \beta_i =1. 
$$
We deduce that $\alpha_1=\alpha_2 = \beta_1 = \beta_2 = 1/2$, $\alpha_3 = \beta_3 = 0$, and $x=y=z=0$, so $C = \{0_3\}$.

It is evident that the relevant minimal faces are $F_S = \co\{(1,0,0),(-1,0,0)\}$, $F_T = \{(0,1,0),(0,-1,0)\}$, so $\dim \co \{F_S,F_T\} = 2$, and $\dim V_T(S) = 3-2=1$, which is consistent with the explicit expression for the cell that we obtained earlier. 
\end{example}

\section{Relations between cells of different orders}\label{sec:order}

The purpose of this section is to provide clear statements that generalise several well-known and fairly trivial properties of higher-order Voronoi cells. The next result shows that a higher-order Voronoi cell is the intersection of lower-order cells built from the set of defining sites. 

\begin{theorem}\label{thm:intersection} Let $T,S\subseteq \R^n$. Suppose that $\mathcal{U}$ is a family of subsets of $S$ such that 
\[
S = \bigcup_{U\in \mathcal{U}} U.
\] 
Then 
\[
V_T(S)  = \bigcap_{U\in \mathcal{U}} V_{T\setminus S}(U). 
\]
\end{theorem}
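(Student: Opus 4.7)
The plan is to unfold both sides of the claimed equality directly from the definition \eqref{eq:defVoronoi} and then reduce to the elementary fact that a supremum over a union equals the supremum of the suprema. The only thing that needs any care is checking that, on the right-hand side, the role of the ``outside'' set $T \setminus U$ collapses nicely to $T \setminus S$ for every $U \in \mathcal{U}$.

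First, I would fix $U \in \mathcal{U}$ and note that since $U \subseteq S$, the sets $U$ and $T \setminus S$ are disjoint, so $(T \setminus S) \setminus U = T \setminus S$. Plugging this into the definition \eqref{eq:defVoronoi} applied to the ambient set $T \setminus S$ gives
\[
V_{T \setminus S}(U) = \left\{ x \in \R^n : \sup_{u \in U} \|x-u\| \leq \inf_{t \in T \setminus S} \|x-t\| \right\}.
\]
This is the key simplification: all cells $V_{T \setminus S}(U)$ that appear in the intersection use the same ``outside'' constraint, namely distance to $T \setminus S$, regardless of which $U$ we pick.

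Next, I would intersect over $U \in \mathcal{U}$ and note that $x$ belongs to every $V_{T \setminus S}(U)$ if and only if $\sup_{u \in U} \|x-u\| \leq \inf_{t \in T \setminus S} \|x-t\|$ holds uniformly in $U$, which is equivalent to
\[
\sup_{U \in \mathcal{U}} \; \sup_{u \in U} \|x-u\| \; \leq \; \inf_{t \in T \setminus S} \|x-t\|.
\]
Using the hypothesis $S = \bigcup_{U \in \mathcal{U}} U$, the double supremum on the left collapses to $\sup_{s \in S} \|x-s\|$, and the resulting inequality is exactly the defining condition for $V_T(S)$.

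I do not expect any real obstacle here; the statement is essentially a rewriting of the definition combined with the tautology about suprema over unions. The only pitfall worth flagging is the disjointness argument $(T \setminus S) \setminus U = T \setminus S$, which relies on $U \subseteq S$ and is precisely why the cells in the intersection are taken over the reduced ambient set $T \setminus S$ rather than $T$ itself.
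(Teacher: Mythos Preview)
Your proposal is correct and is essentially the same argument as the paper's: both unfold the definition and reduce to the elementary observation that a pointwise inequality holds for all $s\in S$ if and only if it holds for all $s$ in each $U\in\mathcal U$ covering $S$. The only cosmetic difference is that you phrase it via the $\sup/\inf$ formulation of \eqref{eq:defVoronoi} and collapse a double supremum, while the paper works with the equivalent pairwise inequalities $\|x-s\|\leq\|x-t\|$ directly; your explicit remark that $(T\setminus S)\setminus U = T\setminus S$ makes transparent a step the paper leaves implicit.
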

\begin{proof} Let $x\in V_T(S)$. Then 
\[
\|x-s\|\leq \|x-t\|\quad\forall s \in S, \; t\in T\setminus S.
\]
It follows that for every $U\in \mathcal{U}$ 
\[
\|x-s\|\leq \|x-t\|\quad\forall s \in  U, \; t\in T\setminus S.
\]
Therefore, 
\[
V_T(S)  \subseteq  \bigcap_{U\in \mathcal{U}} V_{T\setminus S}(U). 
\]

To show the converse, let 
\[
x\in  \bigcap_{U\in \mathcal{U}} V_{T\setminus U}(S\setminus U).
\]
Choose any $s\in S$. There exists $U\in \mathcal U$ such that $s\in U$, and therefore
\[
\|x-s\|\leq \|x-t\|\quad \forall t \in T\setminus S.
\]
Since $s\in S$ is arbitrary, we have by the definition of Voronoi cell $x\in V_T(S)$. 
\end{proof}

Let $T$ be a finite subset of $\R^n$, and let $S$ be a proper subset of $T$. Denote by $\mathcal{S}_k$ the set of all subsets of $S$ of cardinality exactly $k$, for all $k\in \{1,\dots, K\}$, where $K=|S|$. 


\begin{corollary}\label{cor:intersection} Let $S,T\subset \R^n$ be such that $T$ is discrete and $S$ is finite.  Then for any $k\in \{1,\dots, |S|\}$
\[
V_T(S)  = \bigcap_{S'\in \mathcal{S}_k} V_{T\setminus S'}(S')= \bigcap_{S'\in \mathcal{S}_k} V_{T}(S'). 
\]
\end{corollary}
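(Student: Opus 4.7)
The plan is to view the corollary as a specialisation of Theorem~\ref{thm:intersection} to the particular cover $\mathcal{U} = \mathcal{S}_k$ of $S$, followed by a notational identification that aligns the various ways of writing an order-$k$ cell built from a subset of $S$.

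First I would check that $\mathcal{S}_k$ covers $S$ whenever $1 \leq k \leq |S|$: for any $s \in S$ we can pad $\{s\}$ to a $k$-element subset of $S$ (using $|S| \geq k$), so every $s$ lies in some $S' \in \mathcal{S}_k$ and $S = \bigcup_{S' \in \mathcal{S}_k} S'$. The hypotheses of Theorem~\ref{thm:intersection} are thus met, and its conclusion immediately produces a representation of $V_T(S)$ as the intersection, over $S' \in \mathcal{S}_k$, of order-$k$ Voronoi cells built from the subsets $S'$. This handles the first equality.

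For the second equality I would simply unfold the definition~\eqref{eq:defVoronoi}. Since $S' \cap (T \setminus S') = \emptyset$ by construction, we have $(T \setminus S') \setminus S' = T \setminus S'$, so the system of inequalities $\|x - s\| \leq \|x - t\|$ (for $s \in S'$, $t \in T \setminus S'$) defines both $V_{T \setminus S'}(S')$ and $V_T(S')$. Hence the two cells coincide as sets, and intersecting over all $S' \in \mathcal{S}_k$ transports the identity to the level of intersections.

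I do not expect a serious obstacle; the entire argument is bookkeeping. The one point requiring care is ensuring that the cell $V_T(S')$ appearing on the right-hand side of the corollary is indeed the same cell produced by Theorem~\ref{thm:intersection} when specialised to $\mathcal{U} = \mathcal{S}_k$, which is why the notational identification $V_{T \setminus S'}(S') = V_T(S')$ is presented as an explicit intermediate step rather than left implicit.
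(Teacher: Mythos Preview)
Your plan to specialise Theorem~\ref{thm:intersection} to $\mathcal{U} = \mathcal{S}_k$ is exactly the intended route, but there is a mismatch you gloss over. Theorem~\ref{thm:intersection} yields
\[
V_T(S) = \bigcap_{S' \in \mathcal{S}_k} V_{T \setminus S}(S'),
\]
with $T \setminus S$ (the complement of the \emph{whole} set $S$) in the subscript. The corollary as printed has $T \setminus S'$ there, and $V_{T \setminus S}(S') \neq V_{T \setminus S'}(S')$ in general: the first has competitor set $T \setminus S$, the second the strictly larger competitor set $T \setminus S'$, so $V_{T \setminus S'}(S') \subseteq V_{T \setminus S}(S')$. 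Your identification $V_{T \setminus S'}(S') = V_T(S')$ is correct, but it only handles the (trivial) second equality; it does not bridge the gap between what Theorem~\ref{thm:intersection} actually delivers and the first equality.

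That gap cannot be closed, because the first equality is false as written. Take $n = 1$, $T = \{0, 1, 10\}$, $S = \{0, 1\}$, $k = 1$: then $V_T(S) = (-\infty, 5]$, whereas $V_T(\{0\}) = (-\infty, \tfrac12]$ and $V_T(\{1\}) = [\tfrac12, \tfrac{11}{2}]$, so $\bigcap_{S' \in \mathcal{S}_1} V_T(S') = \{\tfrac12\}$. The corollary almost certainly contains a misprint: the middle term should read $V_{T \setminus S}(S')$, consistent with the sentence immediately following the corollary and with its invocation in the proof of Lemma~\ref{lem:neighbours}. Under that reading your first step is complete; but then the second equality $\bigcap V_{T\setminus S}(S') = \bigcap V_T(S')$ also fails by the same example, and only the inclusion $\bigcap_{S'} V_T(S') \subseteq V_T(S)$ survives.
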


In particular, Corollary~\ref{cor:intersection} recovers the well-known result that order $k$ Voronoi cell is the intersection of first-order cells of all points in $S$ on the set of sites $T\setminus S$.

The next lemma is similar in spirit, providing an uppper estimate on the higher-order Voronoi cells from cells of lower order. However notice that in this case the lower cells are constructed on exactly the same set $T$. This result is helpful in identifying neighbouring points from the Voronoi diagram of a lower order. The essential difference between Theorem~\ref{thm:intersection} and Lemma~\ref{lem:inclusions} is illustrated in Fig.~\ref{fig:twoways}.

\begin{figure}[ht]
{\centering
\includegraphics[width=0.4\textwidth]{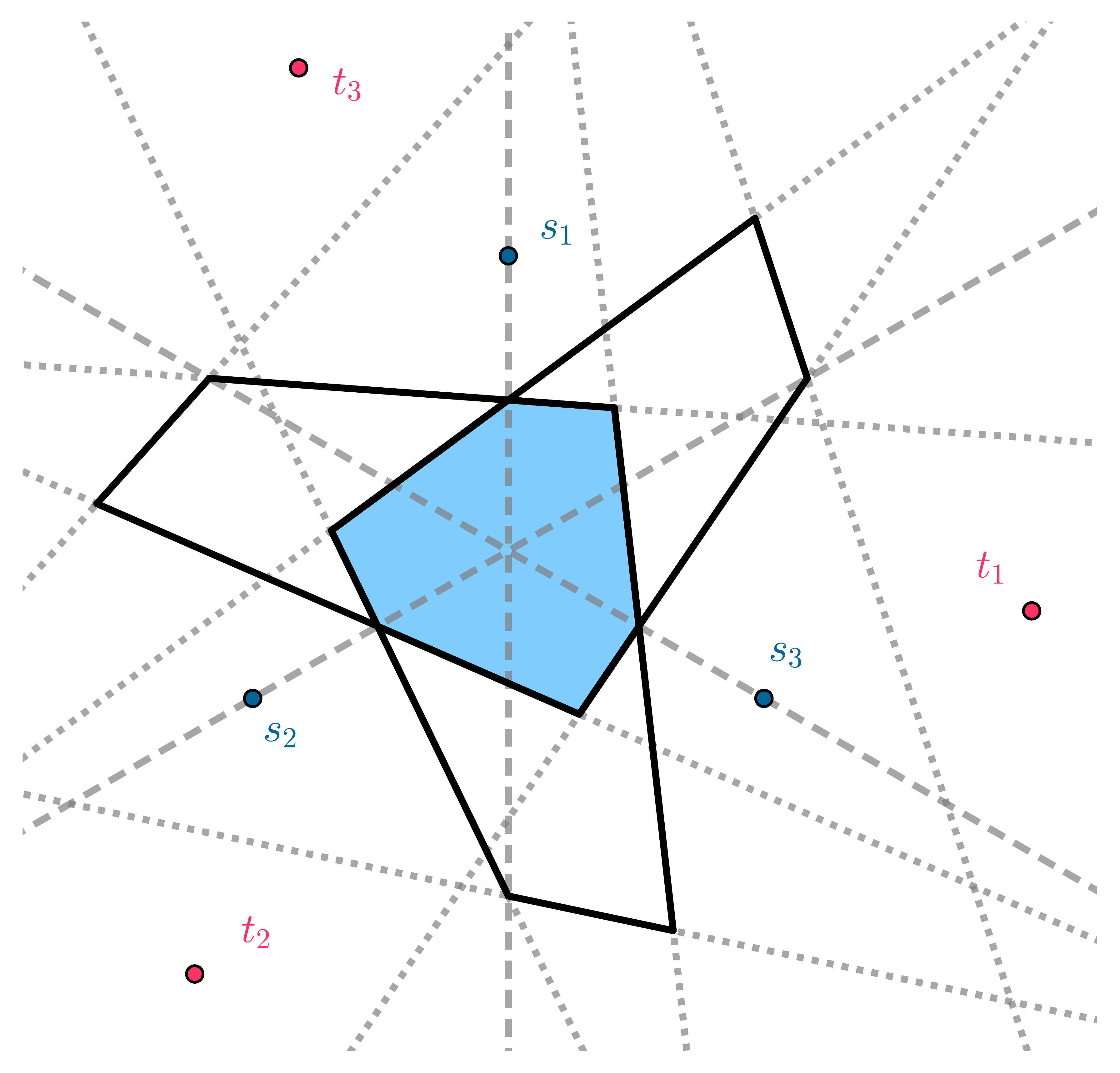}
\quad
\includegraphics[width=0.4\textwidth]{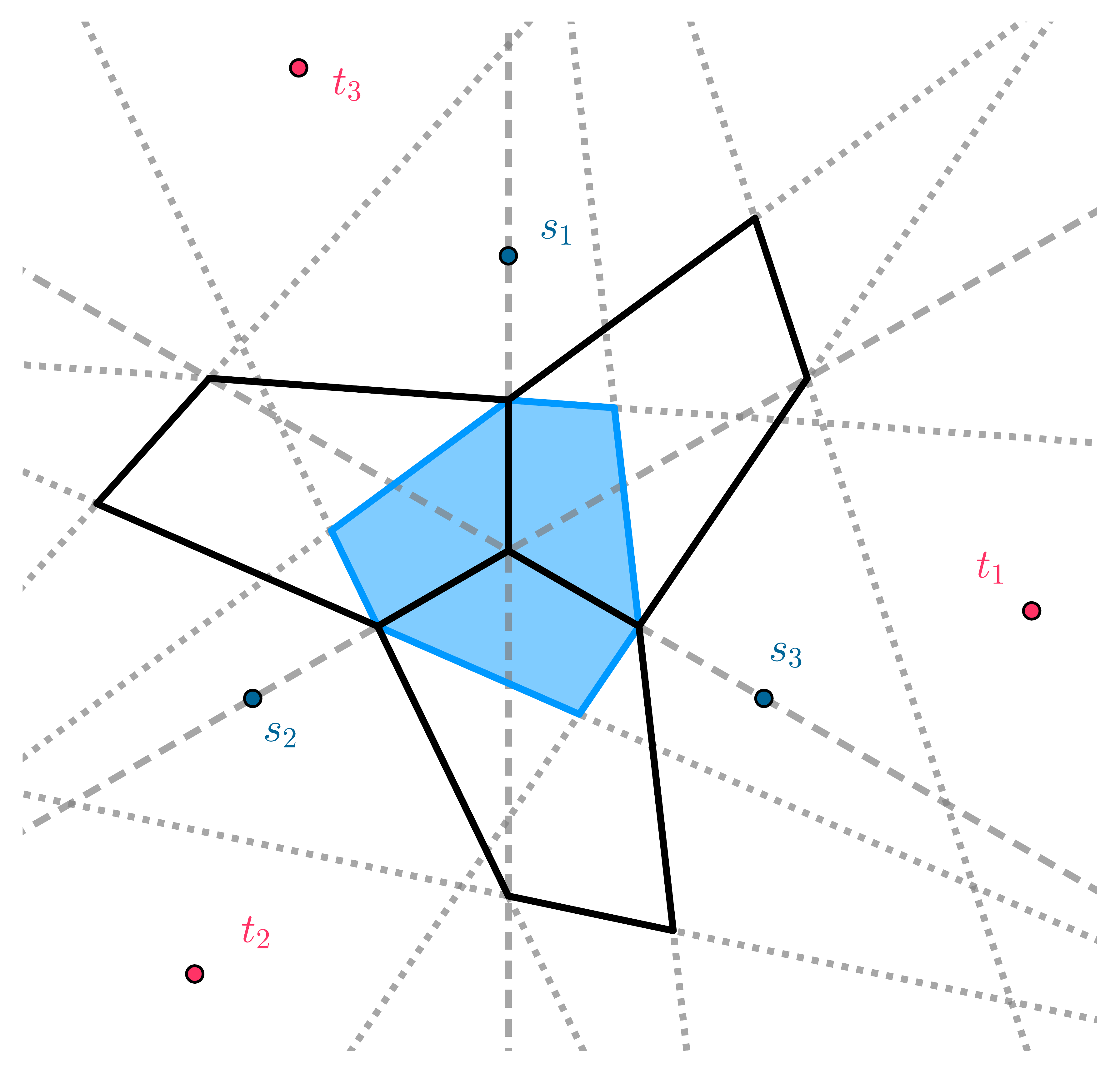}}
\caption{The representation of an order 3 Voronoi cell as the intersection of order 2 cells on the reduced set of sites, illustrating Theorem~\ref{thm:intersection} and an upper bound on the order 3 cell from the intersection of order 2 cells on the complete set of sites, illustrating  Lemma~\ref{lem:inclusions}.}
\label{fig:twoways}
\end{figure}

\begin{lemma}\label{lem:inclusions} Let $S,T\subseteq \R^n$, and assume that  $S\subset T$. As before, denote by $\mathcal{S}_k$ the set of all subsets of $S$ of cardinality exactly $k$, for all $k\in \{1,\dots, K\}$, where $K=|S|$. Then
\begin{equation}\label{eq:inclusions}
V_T(S)  \subseteq \bigcup_{S'\in \mathcal{S}_{K-1}}V_{T}(S')   \subseteq \bigcup_{S'\in \mathcal{S}_{K-2}} V_{T}(S')  \subseteq \cdots \subseteq   \bigcup_{S'\in \mathcal{S}_{1}} V_T(S').
\end{equation}
\end{lemma}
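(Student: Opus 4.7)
The plan is to reduce the entire chain to a single inductive step: for every $k \in \{2, \ldots, K\}$ and every $S' \subseteq S$ with $|S'| = k$, I will show that $V_T(S') \subseteq \bigcup_{S'' \in \mathcal{S}_{k-1}} V_T(S'')$. Once this is established, the first inclusion in \eqref{eq:inclusions} is just the case $S' = S$ (with $k = K$), and the remaining inclusions follow by taking unions over all $S' \in \mathcal{S}_k$ on both sides. So the substantive content is showing that any point belonging to a Voronoi cell of order $k \geq 2$ (whose defining set is drawn from $S$) also belongs to some Voronoi cell of order $k-1$ with defining set drawn from $S$.

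The key idea is the natural one: given $x \in V_T(S')$, remove from $S'$ a site that is \emph{farthest} from $x$. Concretely, since $S'$ is finite and nonempty, pick $s_0 \in S'$ with $\|x - s_0\| = \max_{s \in S'} \|x - s\|$ (breaking ties arbitrarily), and set $S'' := S' \setminus \{s_0\}$. Then $S'' \subseteq S$ and $|S''| = k - 1$, so $S'' \in \mathcal{S}_{k-1}$.

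The verification that $x \in V_T(S'')$ is a direct check. Since $T \setminus S'' = (T \setminus S') \cup \{s_0\}$, I need to show $\|x - s\| \leq \|x - t\|$ for every $s \in S''$ and every $t \in T \setminus S''$, splitting by cases. If $t \in T \setminus S'$, then the inequality holds because $s \in S'' \subset S'$ and $x \in V_T(S')$. If $t = s_0$, then $\|x - s\| \leq \|x - s_0\|$ by the defining property of $s_0$ as a farthest point of $S'$ from $x$. This completes the inductive step, and hence the full chain.

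I do not expect any real obstacle here: the argument is essentially a one-line observation once the farthest-point choice is made. The only point to be mindful of is that $S'$ must be finite for the maximum to be attained, which is guaranteed by $|S'| = k \leq K < \infty$; ties are harmless since any maximiser works. It is worth noting that the inclusions can be strict, as illustrated by Figure~\ref{fig:twoways}, so this lemma gives an upper bound on $V_T(S)$ rather than a characterisation.
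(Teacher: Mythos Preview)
Your proof is correct and follows essentially the same approach as the paper: reduce the chain to a single inductive step, and for that step remove from the defining set a site farthest from $x$. Your write-up is in fact slightly more explicit than the paper's---you spell out the case split $t\in T\setminus S'$ versus $t=s_0$ in verifying $x\in V_T(S'')$, whereas the paper simply asserts this---but the underlying idea is identical.
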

\begin{proof} Note that to prove the inclusions in \eqref{eq:inclusions} it is sufficient to show that
\[
V_T(S) \subseteq \bigcup_{S'\in \mathcal{S}_{K-1}}V_{T}(S'), 
\] 
the rest follows by induction. 

Now let $x\in V_T(S)$. We have 
\[
\|x-s\|\leq \|x-t\| \quad \forall s\in S, t\in T\setminus S.
\]
Let $\bar s\in S$ be such that $\|x-\bar s\|=\max_{s\in S}\|x-s\|$ (such $\bar s$ exists since we assumed that $S$ is closed). Then $S'' :=S\setminus \bar s \in \mathcal{S}_{K-1}$, and so 
\[
x\in V_{T}(S\setminus \bar s) =  V_{T}(S'') \subseteq  \bigcup_{S'\in \mathcal{S}_{K-1}}V_{T}(S').
\]

\end{proof}

It appears from empirical observations that the inclusions \eqref{eq:inclusions} are strict whenever $\interior V_T(S)\neq \emptyset$, however we weren't able to prove this. We discuss this in more detail in Section~\ref{sec:conclusions}.

\section{Neighbour Relations}\label{sec:neighbour}

Efficient construction of Voronoi cells relies on identifying the subset of the \emph{nearest neighbours} for this cell. If a cell $V_T(S)$ for some finite subsets $S$ and $T$ of $\R^n$ has a nonempty interior, then it may be natural to define the \emph{neighbours} of $S$ in $T$ as the set of all points $t$ in $T$ that define a facet of $V_T(S)$, i.e. such that the intersection  
$\{y\,|\, \|y-s\| = \|y-t\|\}\cap V_T(S)$ is a (convex polyhedral) set of dimension $n-1$. Even though this definition makes perfect sense for the classic Voronoi cells, it is unsuitable for higher-order cells, due to the existence of lower-dimensional cells that do not have facets of dimension $n-1$. 

We hence provide a more careful definition of neighbours that captures all special cases including empty and lower-dimensional cells. 

\begin{definition}
A subset $N\subseteq T\setminus S$ is a \emph{minimal} set of neighbours for the cell $V_T(S)$ if 
\[
V_T(S) = V_{N}(S)  \neq V_{N'}(S)  \qquad \forall\; N'\subsetneq N.
\]	
A site $t\in T$ is a \emph{neighbour} of $V_T(S)$ if it belongs to a minimal set of neighbours for $V_T(S)$. We denote the set of all neighbours of $S$ by $N_T(S)$.
\end{definition}

For some configurations of points it may be possible to choose several minimal  subsets of  neighbours that fully characterise the cell. The following example illustrates this point. 

\begin{example}\label{eg:essential} Consider a set $S$ that consists of two opposite points on the unit circle and the set $T$ that is a bunch of points on the circle on  the two semi-circles defined by the two points in $S$ (see Fig.~\ref{fig:ambiguous}).
\begin{figure}[ht]
{\centering
\includegraphics[width=0.4\textwidth]{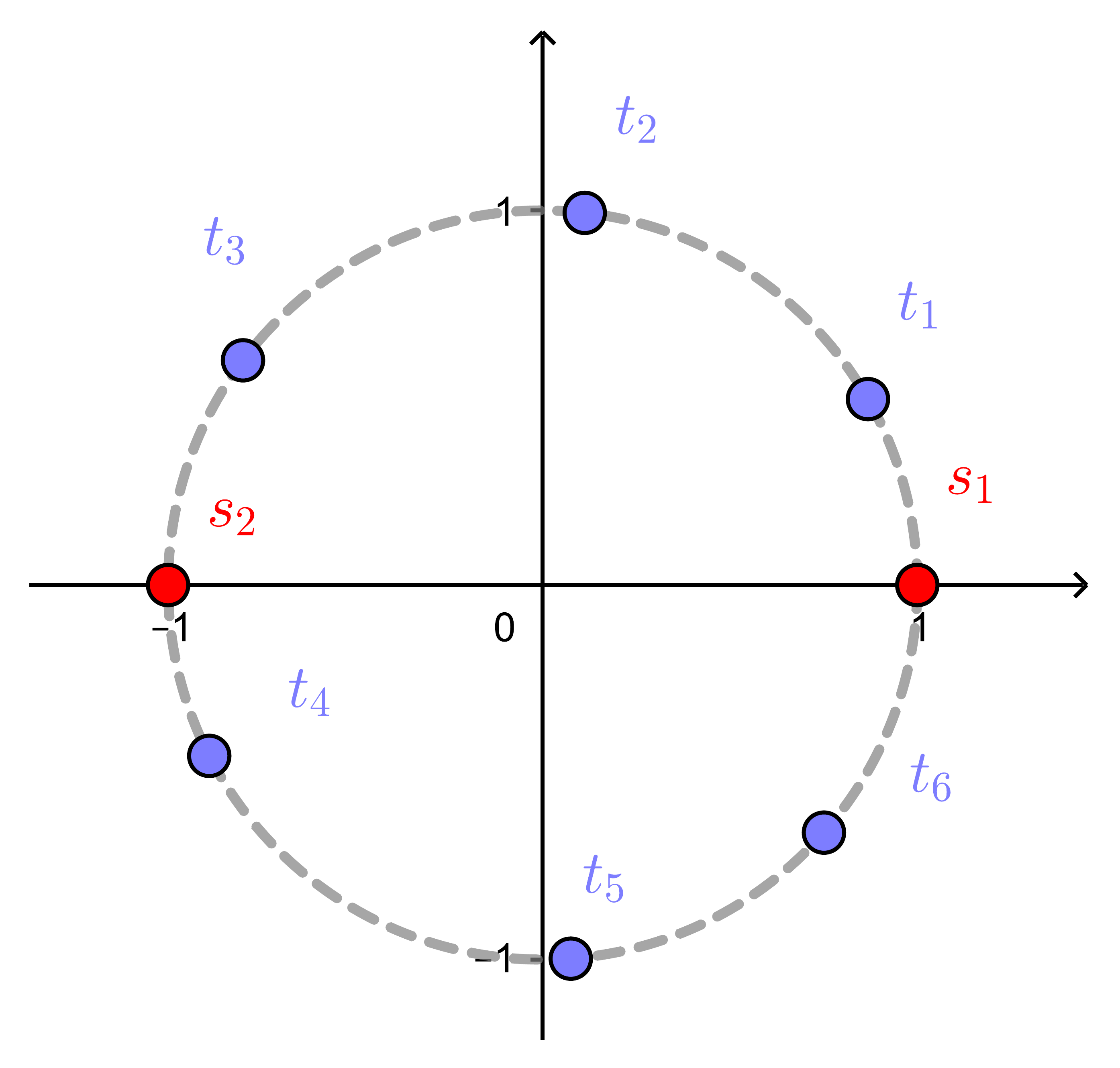}}
\caption{Several choices of the minimal set of neighbours}
\label{fig:ambiguous}
\end{figure}
It follows from Theorem~\ref{thm:ballchar} that the cell $V_T(S)$ is a singleton (centre of the circle) and removing any one of the points from $T\setminus S$ does not change the cell. It is hence possible to identify several different minimal subsets of neighbours that are sufficient to fully define the cell $V_T(S)$. 
\end{example}

Note that it may happen that for a pair of points  $(t,s) \in (T\setminus S)\times S$ their bisector has a nonempty intersection with the cell, however $t\notin N_T(S)$. The next example illustrates this observation.

\begin{example}\label{eg:nonn} Let $T = \{(-1,1),(-1,-1),(1,-1)\}$, $S = \{(1,1)\}$. Then (using  Proposition~\ref{prop:handycharacterisation})
\[
V_T(S) = \left\{  (x,y)\in\mathbb{R}^{2}\,:\, -2 x \leq 0, -2 y \leq 0\right\} = \{(x,y)\,|\, x,y\geq 0\}.
\]
In this case the only minimal set of neighbours is $N_T(S) = \{(-1,1),(1,-1)\}$, and while the bisector of the pair $((-1,-1),(1,1))$ touches the Voronoi cell $V_T(S)$ at the origin, the point $(-1,-1)$ is not a neighbour of this Voronoi cell (see Fig.~\ref{fig:nonneighbour}).
\begin{figure}[ht]
{\centering
\includegraphics[width=0.4\textwidth]{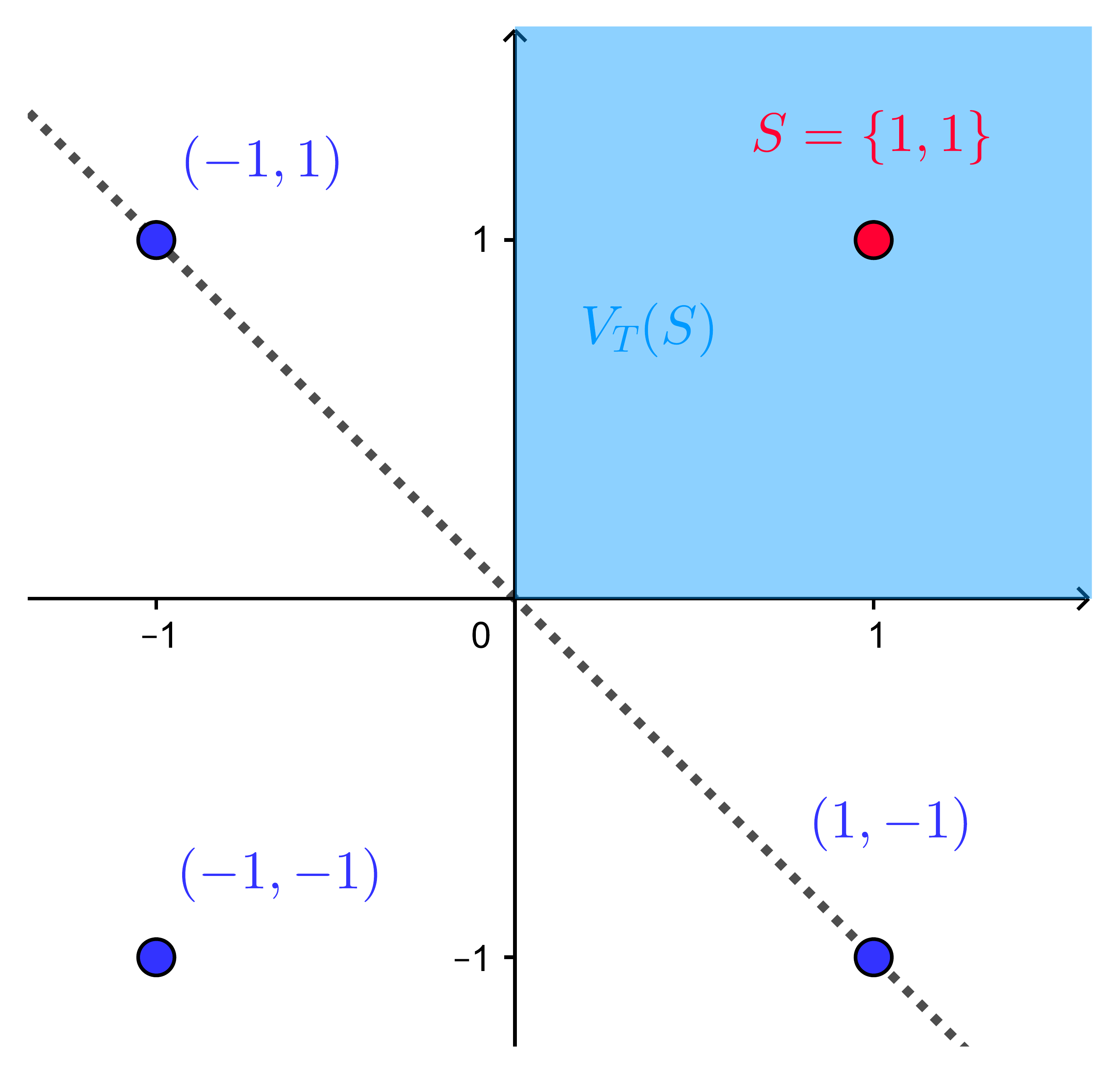}}
\caption{An example of a non-neighbour (see Example~\ref{eg:nonn}).}
\label{fig:nonneighbour}
\end{figure}
 
\end{example}

It is worth noting that if the set $T$ is discrete, and $S$ is compact, then the minimal set of neighbours for $V_T(S)$ is finite. Moreover, the minimal set of neighbours is unique for Voronoi cells with nonempty interiors, as we show next.

\begin{lemma}\label{lem:nintminimal} Suppose that $S,T\subset \R^n$, where $S$ is finite and $T$ is discrete, and assume that $S\subset T$. If $\interior V_T(S)\neq \emptyset$, then a minimal subset of neighbours $N$ is unique. Moreover, each $t\in N$ defines a facet of $V_T(S)$, i.e.
\[
\dim \left( \{y\,|\, \|y-s\| = \|y-t\|\}\cap V_T(S)\right) = n-1.
\]

\end{lemma}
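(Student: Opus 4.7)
The plan is to identify the unique minimal neighbour set as
\[
N^* := \{t\in T\setminus S : \exists\, s\in S \text{ with } H_{s,t} \text{ facet-defining for } V_T(S)\},
\]
where $H_{s,t}=\{x : \langle t-s,x\rangle\le\tfrac{1}{2}(\|t\|^2-\|s\|^2)\}$ is the halfspace from Proposition~\ref{prop:handycharacterisation}. The argument rests on an \emph{injectivity} claim: when $V_T(S)$ is full-dimensional, every facet-defining halfspace of $V_T(S)$ is induced by a unique pair $(s,t)\in S\times(T\setminus S)$.

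To prove injectivity, suppose $H_{s_1,t_1}=H_{s_2,t_2}=H$ with $t_1\ne t_2$. Equality of halfspaces forces each $t_i$ to be the reflection of $s_i$ across $L:=\partial H$, so $s_1\ne s_2$ as well. Choose coordinates so that $L=\{y_n=0\}$ and write $s_i=(a_i,b_i)$, $t_i=(a_i,-b_i)$ with $a_i\in\R^{n-1}$ and $b_i>0$. The cross pairs $(s_1,t_2)$ and $(s_2,t_1)$ lie in $S\times(T\setminus S)$, so the associated halfspaces constrain $V_T(S)$. A direct computation using Proposition~\ref{prop:handycharacterisation} shows that, on $L$,
\[
H_{s_1,t_2}\cap L : \langle a_1-a_2, y'\rangle \ge C,\qquad H_{s_2,t_1}\cap L : \langle a_1-a_2, y'\rangle \le C,
\]
where $C=\tfrac{1}{2}(\|a_1\|^2-\|a_2\|^2+b_1^2-b_2^2)$ and $y=(y',0)\in L$. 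Their intersection is the $(n-2)$-dimensional affine subspace $\{\langle a_1-a_2,y'\rangle = C\}$ when $a_1\ne a_2$, and is empty when $a_1=a_2$ (then $s_1\ne s_2$ forces $b_1\ne b_2$, hence $C\ne 0$ and the two inequalities are inconsistent). In either case $\dim(V_T(S)\cap L)\le n-2$, contradicting that $L$ supports a facet.

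With injectivity established, the standard uniqueness of the irredundant facet representation of a full-dimensional polyhedron yields $V_{N^*}(S)=V_T(S)$: the facet halfspaces of $V_T(S)$ all have their unique $t$'s in $N^*$, and any $H_{s,t}$ with $t\notin N^*$ is redundant and hence implied by these facet halfspaces. Conversely, if $N\subseteq T\setminus S$ satisfies $V_N(S)=V_T(S)$ and $t^*\in N^*\setminus N$, then the facet halfspace furnished by $t^*$ cannot by injectivity be reproduced from $N$, so $V_N(S)$ strictly contains $V_T(S)$ along that facet---a contradiction. Hence every valid neighbour set contains $N^*$, and by minimality the unique minimal $N$ equals $N^*$. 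The ``moreover'' clause is immediate: for each $t\in N$ there exists $s\in S$ with $H_{s,t}$ facet-defining, i.e.\ $\dim(\{y:\|y-s\|=\|y-t\|\}\cap V_T(S))=n-1$.

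The main obstacle is the injectivity step: the cross-bisector restriction calculation, though elementary, must be executed precisely, with the degenerate subcase $a_1=a_2$ handled separately. The remainder of the argument then follows from standard facts about the uniqueness of the irredundant facet representation of a full-dimensional polyhedron.
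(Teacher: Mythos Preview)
Your argument is correct and follows essentially the same route as the paper: identify the minimal neighbour set with those $t$'s that appear in a facet-defining halfspace, and reduce uniqueness to the claim that each facet-defining halfspace of the full-dimensional cell is induced by a \emph{unique} pair $(s,t)$. The only substantive difference is that the paper outsources this injectivity step to \cite[Proposition~2.6]{multipoint}, whereas you supply a self-contained proof via the cross-pair constraints $H_{s_1,t_2}$ and $H_{s_2,t_1}$ restricted to the bisecting hyperplane---a clean elementary argument that makes the lemma independent of that external reference.
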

\begin{proof} If $V_T(S)$ has a nonempty interior, then it is fully determined by the half-spaces defining its facets ($n-1$-dimensional faces). This is the minimal and uniquely defined set of half-spaces. Each one of these half-spaces is defined by a pair $(s,t)\in S\times (T\setminus S)$ via the inequality  $\|s-x\|\leq \|t - x\|$, which can be rewritten as
\[
\langle t-s, x\rangle \leq \frac{1}{2}\left(\|t\|^2 - \|s\|^2\right).
\]

Even though the minimal set of half-spaces is unique, we need to make sure that each half-space is defined by a unique pair $(s,t)\in S\times (T\setminus S)$. This follows from Proposition~2.6 in \cite{multipoint}, where it is proved that if two pairs $(s_1,t_1),(s_2,t_2)\in S\times (T\setminus S)$ define the same half-space, then the relevant inequality is nonessential for the cell, and hence can not define a facet. The last part of the lemma follows from the preceding discussion.
\end{proof}

\begin{lemma}[cf. Lemma~\ref{lem:inclusions}]\label{lem:neighbours} Let $T$ be a discrete subset of $\R^n$, and let $S$ be a proper subset of $T$ such that $\interior V_T(S)\neq \emptyset$. Denote by $\mathcal{S}_k$ the set of all subsets of $S$ of cardinality exactly $k$, for all $k\in \{1,\dots, K\}$, where $K=|S|$. Then
\begin{equation}\label{eq:relationsneigh}
N_{T}(S)  \subseteq \bigcup_{S'\in \mathcal{S}_{k-1}}N_{T\setminus S}(S')   \subseteq \bigcup_{S'\in \mathcal{S}_{k-2}} N_{T\setminus S}(S')  \subseteq \cdots \subseteq   \bigcup_{S'\in \mathcal{S}_{1}} N_{T\setminus S}(S')  .
\end{equation}
\end{lemma}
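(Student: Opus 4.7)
The plan is to reduce the whole chain in \eqref{eq:relationsneigh} to a single inductive step that preserves the ``outside'' set $T\setminus S$ throughout. A crucial preliminary observation is that for any $S'\subseteq S$ we have $(T\setminus S)\setminus S'=T\setminus S$, so $V_{T\setminus S}(S')$ has defining inequalities indexed by pairs $(s,t)\in S'\times(T\setminus S)$. In particular $V_{T\setminus S}(S)=V_T(S)$ and hence $N_{T\setminus S}(S)=N_T(S)$, which lets me treat every cell in the chain uniformly. Applying Theorem~\ref{thm:intersection} to $V_{T\setminus S}(S')$ (playing the roles of ``$T$'' and ``$S$'' with $T\setminus S$ and $S'$) with the cover $\mathcal U=\{S''\subseteq S':|S''|=k-1\}$ of $S'$ by its $(k-1)$-element subsets yields
\[
V_{T\setminus S}(S')=\bigcap_{S''\in\mathcal U}V_{T\setminus S}(S'').
\]
Since $V_T(S)$ has nonempty interior and $V_T(S)\subseteq V_{T\setminus S}(S')\subseteq V_{T\setminus S}(S'')$ for every intermediate subset, each cell along the way has nonempty interior, so Lemma~\ref{lem:nintminimal} is available at every stage.

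The core inductive step I will prove is: if $t\in N_{T\setminus S}(S')$ for some $S'\subseteq S$ with $|S'|=k\ge 1$, then $t\in N_{T\setminus S}(S'')$ for some $S''\subseteq S'$ of size $k-1$. By Lemma~\ref{lem:nintminimal} there exists $s\in S'$ such that the bisector $H_{t,s}=\{x:\|x-s\|=\|x-t\|\}$ carves out a facet $F=H_{t,s}\cap V_{T\setminus S}(S')$ of dimension $n-1$. Choose $S''\subseteq S'$ of size $k-1$ containing this particular $s$, which is possible because at worst one element of $S'\setminus\{s\}$ needs to be removed. From the intersection formula above I have $F\subseteq V_{T\setminus S}(S')\subseteq V_{T\setminus S}(S'')$, and since $(s,t)\in S''\times(T\setminus S)$ the hyperplane $H_{t,s}$ contributes one of the defining inequalities of $V_{T\setminus S}(S'')$ (via Proposition~\ref{prop:handycharacterisation}). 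Therefore $V_{T\setminus S}(S'')\cap H_{t,s}$ contains $F$ and so has dimension $n-1$, making it a facet of $V_{T\setminus S}(S'')$ defined by $t$; Lemma~\ref{lem:nintminimal} then gives $t\in N_{T\setminus S}(S'')$.

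Iterating this step down the chain (starting from $N_T(S)=N_{T\setminus S}(S)$ at the top) produces the full sequence of inclusions in \eqref{eq:relationsneigh}. The delicate point---and the step I expect to require the most care---is ensuring that dimension $n-1$ is preserved when passing from $V_{T\setminus S}(S')$ to $V_{T\setminus S}(S'')$. This could fail if $H_{t,s}$ ceased to be a defining hyperplane of the smaller cell, which is precisely why $S''$ must be chosen to contain the particular $s$ that produced the facet; once this is arranged, the sandwich $F\subseteq V_{T\setminus S}(S'')\cap H_{t,s}\subseteq H_{t,s}$ forces the middle term to be exactly $(n-1)$-dimensional, and the argument closes.
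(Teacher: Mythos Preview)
Your argument is correct and uses the same ingredients as the paper's proof: the facet characterisation of neighbours from Lemma~\ref{lem:nintminimal} together with the intersection formula from Theorem~\ref{thm:intersection}/Corollary~\ref{cor:intersection}, applied inductively down the chain. The only noteworthy difference is stylistic: the paper argues the key step by contradiction (if some $p\in N_T(S)$ failed to define a facet of any $V_{T\setminus S}(S')$, the minimal set of neighbours would not be unique), whereas you argue directly by locating the specific $s$ responsible for the facet and choosing $S''\ni s$; your version also makes the inductive setup cleaner by observing $N_T(S)=N_{T\setminus S}(S)$ and explicitly verifying nonempty interiors at every level, points the paper leaves implicit.
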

\begin{proof} It is sufficient to show that
\[
N_T(S)  \subseteq \bigcup_{S'\in \mathcal{S}_{|S|-1}}N_{T\setminus S}(S'),
\]
the rest follows by induction. To see that this is indeed true, recall that by Lemma~\ref{lem:nintminimal}  each point in the (unique) minimal set  of neighbours defines a facet of the cell $V_T(S)$. By Corollary~\ref{cor:intersection} we have 
\[
V_T(S) = \bigcap_{S'\in \mathcal{S}_{|S|-1}} V_{T\setminus S}(S'). 
\]
If for every $p\in N_T(S)$ there exists $ S' \in \mathcal{S}_{|S|-1}$ such that $p$ defines a facet of $V_{T\setminus S} (S')$, then we are done. Otherwise there is a $p\in N_{T}(S)$ that does not define facets of $|S|-1$-order cells. This means that we can choose an alternative minimal set of neighbours, a contradiction. 
\end{proof}

\begin{remark} Note that without the condition $\interior V_T(S)\neq \emptyset$ the statement of the preceding lemma may not be true: this is evident from Fig.~\ref{fig:ambiguous}.
\end{remark}

Finally, we prove a statement very similar to Lemma~\ref{lem:inclusions}, replacing $T\setminus S$ with $T$ in the chain of inclusions. Note that this may be more useful for practical purposes, as it allows to reduce the set of candidate neighbours from an existing lower-order diagram. 

\begin{lemma}\label{lem:neighboursT} Let $T$ be a discrete subset of $\R^n$, and let $S$ be a proper subset of $T$ such that $\interior V_T(S)\neq \emptyset$. As before, denote by $\mathcal{S}_k$ the set of all subsets of $S$ of cardinality exactly $k$, for all $k\in \{1,\dots, K\}$, where $K=|S|$. Then
\begin{equation}\label{eq:relationsneighT}
N_{T}(S)  \subseteq \bigcup_{S'\in \mathcal{S}_{k-1}}N_{T}(S')   \subseteq \bigcup_{S'\in \mathcal{S}_{k-2}} N_{T}(S')  \subseteq \cdots \subseteq   \bigcup_{S'\in \mathcal{S}_{1}} N_{T}(S')  .
\end{equation}
\end{lemma}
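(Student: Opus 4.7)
The plan is to mirror the proof of Lemma~\ref{lem:neighbours}, but replace its use of Corollary~\ref{cor:intersection} in the form $V_T(S)=\bigcap V_{T\setminus S}(S')$ with the second identity of that corollary, namely $V_T(S)=\bigcap_{S'\in\mathcal{S}_{K-1}} V_T(S')$, which keeps the full site set $T$. As before, the chain in \eqref{eq:relationsneighT} will follow by induction once the single step
\[
N_T(S)\subseteq\bigcup_{S'\in\mathcal{S}_{K-1}} N_T(S')
\]
is established: reapplying this step to each $V_T(S')$ gives the subsequent inclusions, since the hypothesis $\interior V_T(S')\neq\emptyset$ is inherited from $V_T(S)\subseteq V_T(S')$.

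To carry out the single step, I would fix $p\in N_T(S)$ and invoke Lemma~\ref{lem:nintminimal}: since $\interior V_T(S)\neq\emptyset$, the minimal neighbour set is unique and $p$ corresponds to a unique pair $(s,p)\in S\times(T\setminus S)$ whose bisector $H=\{y\,:\,\|y-s\|=\|y-p\|\}$ meets $V_T(S)$ in a set of dimension $n-1$. Provided $|S|\geq 2$ (otherwise the chain is trivial), I choose any $s'\in S\setminus\{s\}$ and set $S'=S\setminus\{s'\}\in\mathcal{S}_{K-1}$. Then $s\in S'$ and $p\in T\setminus S\subseteq T\setminus S'$, so by Proposition~\ref{prop:handycharacterisation} the pair $(s,p)$ contributes a half-space to the representation of $V_T(S')$.

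The last step is to show that this half-space is essential, so that $p\in N_T(S')$. Because $V_T(S)\subseteq V_T(S')$, the intersection $H\cap V_T(S')$ contains the $(n-1)$-dimensional set $H\cap V_T(S)$; on the other hand it lies in the hyperplane $H$, so its dimension is exactly $n-1$. Since $V_T(S')$ has a nonempty interior (again inherited from $V_T(S)$), an $(n-1)$-dimensional face is a facet, and the uniqueness part of Lemma~\ref{lem:nintminimal} (appealing to \cite[Proposition~2.6]{multipoint}) ensures that the only pair producing this facet is $(s,p)$, so $p$ belongs to the minimal neighbour set $N_T(S')$.

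The main technical point—and essentially the only place the hypothesis $\interior V_T(S)\neq\emptyset$ is used—is this inheritance of facets from $V_T(S)$ to the larger cell $V_T(S')$ on the same bisector hyperplane; it is immediate from set containment and a dimension bound, which is why no real obstacle arises. Everything else is bookkeeping with Corollary~\ref{cor:intersection} and the straightforward induction that parallels Lemma~\ref{lem:neighbours}.
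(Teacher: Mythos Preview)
Your proof is correct, but it follows a genuinely different route from the paper's argument. The paper argues by contradiction: if some $t\in N_T(S)$ lay in no $N_T(S')$ with $S'\in\mathcal{S}_{K-1}$, then (using uniqueness of the minimal neighbour set for each $V_T(S')$, which has nonempty interior since $V_T(S)\subseteq V_T(S')$) one has $V_T(S')=V_{T\setminus\{t\}}(S')$ for every such $S'$; intersecting over all $S'$ via Corollary~\ref{cor:intersection} yields $V_T(S)=V_{T\setminus\{t\}}(S)$, contradicting $t\in N_T(S)$.

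Your argument is instead direct and constructive: you locate the specific $S'=S\setminus\{s'\}$ (any $s'\neq s$, where $(s,p)$ is the unique pair giving the facet of $V_T(S)$) for which $p\in N_T(S')$, by pushing the facet $H\cap V_T(S)$ up into the larger cell $V_T(S')$ and noting it is still a facet there. This buys you explicit information about \emph{which} lower-order cell witnesses the neighbour relation, at the cost of leaning a bit harder on the facet characterisation and the uniqueness of the defining pair (Proposition~2.6 of \cite{multipoint}). The paper's approach is less informative but avoids tracking a particular facet, using only the intersection identity. Your handling of the induction, including the observation that $\interior V_T(S')\neq\emptyset$ is inherited from $V_T(S)\subseteq V_T(S')$, is exactly what is needed in both approaches.
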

\begin{proof} Just as in the proof of Lemma~\ref{lem:inclusions}, it is sufficient to show that
\[
N_T(S)  \subseteq \bigcup_{S'\in \mathcal{S}_{|S|-1}}N_{T}(S').
\]
Suppose that this is not true, and for some configuration there exists $t\in N_T(S)$ such that 
\[
t\notin N_T(S')\quad \forall S'\in \mathcal{S}_{|S|-1}.
\]
Then
\[
V_T(S') = V_{T\setminus \{s\}}(S') \quad \forall S'\in \mathcal{S}_k, 
\]
and by Corollary~\ref{cor:intersection}
\[
V_T(S)  = \bigcap_{S'\in \mathcal{S}_k} V_{T}(S') = \bigcap_{S'\in \mathcal{S}_k} V_{T\setminus \{t\}}(S')  . 
\]
We deduce that $t$ is not in $N_T(S)$, a contradiction.
\end{proof}

\section{Conclusions}\label{sec:conclusions}

The motivation for this paper comes from the construction of regular tessellations of the Euclidean space via Voronoi diagrams on lattices on the one hand \cite{ryan}, and from several puzzling examples and negative results related to Voronoi diagrams on the plane obtained in \cite{multipoint}, on the other. We have succeeded in explaining the lack one-dimensional cells in $\R^2$ observed in  \cite{multipoint} proving that there can be no higher-order cells of dimension $n-1$ (see Theorem~\ref{thm:dimension}), however the question of generalising other negative results from that study remains open. For instance, it was shown in  \cite{multipoint} that in the case when $|T|=4$ and $S\subsetneq T$, $|S|\geq 2$, the higher-order cell $V_T(S)$ can not be a triangle or a quadrilateral. In contrast to line segments, which are impossible to realise as higher-order Voronoi cells in the plane, triangles and cyclic quadrilaterals can be realised using larger sets of sites, as shown in Fig.~\ref{fig:triangsquare}.
\begin{figure}[ht]
\includegraphics[width = 0.45\textwidth]{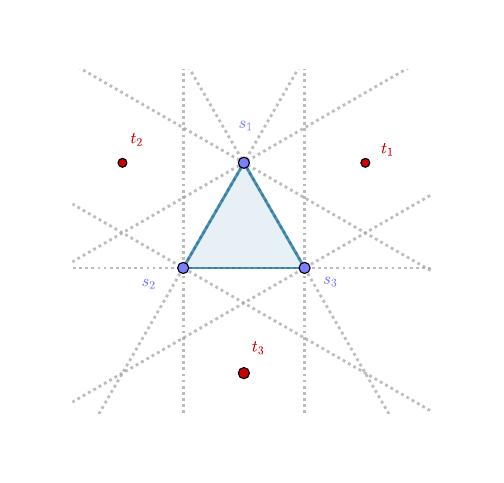}
\quad
\includegraphics[width = 0.45\textwidth]{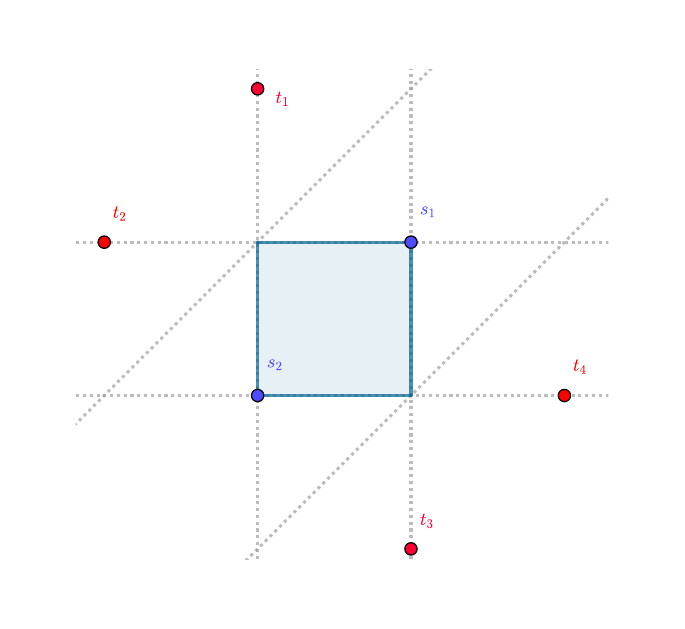}
\caption{Triangle and square as order 2 Voronoi cells}
\label{fig:triangsquare}
\end{figure}
The triangular cell $V_T(S)$ is the convex hull of the set $S = \{s_1,s_2,s_3\}$, and there are four inequalities intersecting at each vertex; likewise, for the square cell $V_T(S)$ the two sites of $S$ are opposite vertices of the square, and the construction requires four additional sites in $T$. There are redundant (coincident) inequalities at two vertices of the square, and hence this configuration appears non-generic. This brings us to the following question.

\begin{question}Is it true that higher-order Voronoi cells on discrete set of sites inscribed in a sphere exhibit degeneracy of sorts? Can this be captured in a rigorous way?
\end{question}

A related question is in the spirit of \cite{multipoint} and \cite{farthest} (where a general geometric characterisation of nonempty farthest cells was obtained): it is well-known that any polyhedral set with nonempty interior can be represented as a classic (first-order) Voronoi cell. On the other hand, we know from Corollary~\ref{cor:nodimn1} that cells of dimension $n-1$ are impossible in $\R^n$.

\begin{question} Is it true that any polyhedral set of dimension different to $n-1$ can be represented as a higher-order Voronoi cell in $\R^n$? What is the smallest number of sites $|T\cup S|$ needed to represent a given polytope as a $k$-th order Voronoi cell?
\end{question}

We have heavily utilised in our proofs the Euclidean ball characterisation from Theorem~\ref{thm:ballchar}. It appears that for the interior points of a Voronoi cell there is much flexibility in choosing the radius of the Euclidean ball that satisfies the theorem and certifies that the point belongs to the cell. However, for degenerate configurations (cells with empty interior and vertices) the choice of the radius appears restricted. 

\begin{question} Is there a relation between the radii of Euclidean balls satisfying \eqref{eq:ballchar} of Theorem~\ref{thm:ballchar} and the `singularity' or `degeneracy' of the centre point?
\end{question}

There are several minor questions that relate to the tightness of the results in this paper. These are not necessarily of paramount practical importance, however, answering these questions would enhance our understanding of the structure of higher-order Voronoi cells.

\begin{question}
Is it true that in nondegenerate cases (say when $\interior V_T(S)$ is nonempty) the inclusions \eqref{eq:inclusions} in Lemma~\ref{lem:inclusions} are strict?
\end{question}

\begin{question} Note that our main result Theorem~\ref{thm:dimension} is stated for the discrete set of sites. It would be interesting to figure out if this result is true in a more general setting.
\end{question}

\section{Acknowledgements}

The second author is grateful to the Australian Research Council for continuing support.

\bibliographystyle{plain}
\bibliography{references}
	
\end{document}